\def\thetitle{Hopf fibrations are characterized by being fiberwise homogeneous}
\def\theauthor{Haggai Nuchi}
\newcommand{\R}{\mathbb{R}}
\newcommand{\Z}{\mathbb{Z}}
\newcommand{\C}{\mathbb{C}}
\newcommand{\HH}{\mathbb{H}}
\newcommand{\F}{\mathcal{F}}
\newcommand{\mychi}{\ensuremath \raisebox{2pt}{$\chi$}}
\newcommand{\Tr}{\operatorname{Tr}}
\newcommand{\Isom}{\operatorname{Isom}}
\newcommand{\Gr}{G_2\R^4}
\newcommand{\n}{\nabla}
\newtheorem{thm}{Theorem}[section]
\newtheorem*{thm:local}{Theorem~\ref{Thm:LocallyFWH}}
\newtheorem{prop}[thm]{Proposition}
\newtheorem{lem}[thm]{Lemma}
\newtheorem{cor}[thm]{Corollary}
\newtheorem*{mainthm}{Main Theorem}
\newtheorem*{keyLem}{Key Lemma}
\newcommand{\keylemma}{%
  \begin{keyLem}
      Let $G$ be a compact connected Lie group acting irreducibly on $\R^d$, and acting transitively on a fiberwise homogeneous $C^1$ fibration $\F$ of $S^{d-1}$ by spheres. Let $H\vartriangleleft G$ be the normal subgroup (possibly discrete or disconnected) of $G$ which takes each fiber of $\F$ to itself.
      \begin{enumerate}
        \item Suppose $G$ acts on $\R^{2n+2}$, and $G/H\cong SU(n+1)/\Z_{n+1}$. Then $G$ contains $SU(n+1)$ as a subgroup, acting in the standard way on $\R^{2n+2}$.
        \item Suppose $G$ acts on $\R^{4n+4}$, and $G/H\cong Sp(n+1)/\Z_2$. Then $G$ contains $Sp(n+1)$ as a subgroup, acting in the standard way on $\R^{4n+4}$.
        \item Suppose $G$ acts on $\R^{16}$ and $G/H\cong SO(9)$. Then $G$ contains $Spin(9)$ as a subgroup, acting as the spin representation on $\R^{16}$.
      \end{enumerate}
    \end{keyLem}}
\theoremstyle{definition}
\newtheorem{dfn}[thm]{Definition}
\theoremstyle{remark}
\title[\small \thetitle]{\LARGE \thetitle}
\author{\theauthor}
\date{}
\begin{document}
  \begin{abstract}
    The Hopf fibrations of spheres by great spheres have a number of interesting properties. In particular, each one is fiberwise homogeneous: for any two great $k$-sphere fibers in the Hopf fibration of the $n$-sphere, there is a fiber-preserving isometry of the $n$-sphere taking the first given fiber to the second. In this paper, we prove that the Hopf fibrations are characterized by this property, among all fibrations of round spheres by smooth subspheres.
    
    In the special case of the 3-sphere fibered by great circles, we prove something stronger. We prove that a fibration of a connected open set by great circles which is locally fiberwise homogeneous is part of a Hopf fibration.
  \end{abstract}
  \maketitle
  
  \section{Introduction}
    \subsection{Background}
      Heinz Hopf's famous fibrations \cite{hopf1931abbildungen, hopf1935abbildungen} of $S^{2n+1}$ by great circles, $S^{4n+3}$ by great $3$-spheres, and $S^{15}$ by great $7$-spheres have a number of interesting properties. Besides providing the first examples of homotopically nontrivial maps from one sphere to another sphere of lower dimension, they all share two striking features:
	  \begin{enumerate}
	    \item Their fibers are parallel, in the sense that any two fibers are a constant distance apart, and
	    \item The fibrations are highly symmetric. For example, there is a fiber-preserving isometry of each total space which takes any given fiber to any other one.
      \end{enumerate}

      Hopf fibrations have been characterized up to isometry by the first property above, initially among all fibrations of spheres by great subspheres \cite{wong1961isoclinic, ranjan1985riemannian, escobales1975riemannian, wolf1963elliptic, wolf1963geodesic}, and later in the stronger sense among all fibrations of spheres by smooth subspheres \cite{gromoll1988low, wilking2001index}.

      In this paper, we show that the Hopf fibrations are also characterized by their ``fiberwise homogeneity'' expressed above in (2), and in the strong sense among all fibrations of spheres by smooth subspheres.

	  The proof uses the representation theory of Lie groups, and relies on the work of Montgomery-Samelson \cite{montgomery1943transformation} and Borel \cite{borel1949some}, and its generalization by Oniscik \cite{onishchik1963transitive}, in which they find all the compact Lie groups which act transitively and effectively on spheres and projective spaces.
    
      \begin{dfn}
        Let $\F$ be a fibration of a riemannian manifold $(M,g)$. We say that $\F$ is {\em fiberwise homogeneous} if for any two fibers there is an isometry of $(M,g)$ taking fibers to fibers and taking the first given fiber to the second given fiber.
      \end{dfn}
    
      In Section~\ref{Sec:HopfMainThm}, we prove the main theorem of this paper: that the Hopf fibrations are characterized by being fiberwise homogeneous. We leave the representation theory to Section~\ref{Sec:RepnTheory}.
    
      In the special case of the 3-sphere fibered by great circles, more is true. If even a connected open set is fibered by great circles in a locally fiberwise homogeneous way, then that fibration is part of a Hopf fibration. We prove this (and give a complete definition of ``locally fiberwise homogeneous'') in Section~\ref{Sec:Local}.
      
      In a companion paper \cite{nuchi2014space}, we give complete descriptions of all fiberwise homogeneous fibrations of Euclidean and Hyperbolic 3-space by geodesics, see Figure~\ref{Fig:ExampleFWH}. In another companion paper \cite{nuchi2014surprising}, we describe a surprising example of a fiberwise homogeneous fibration of the Clifford torus $S^3\times S^3$ in the 7-sphere by great 3-spheres, which is not part of a Hopf fibration.
      \begin{figure}[h]
        \centering
        \includegraphics[width=0.3\textwidth]{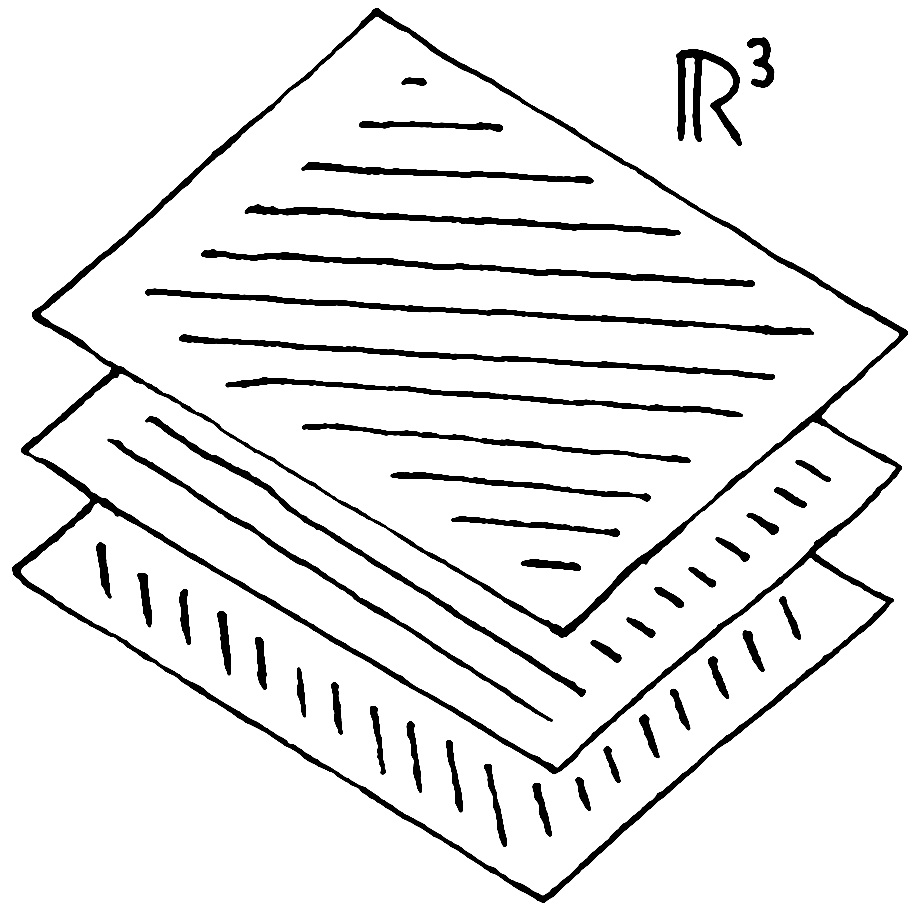}
        \caption{An example of a fiberwise homogeneous fibration of Euclidean 3-space by straight lines, not all parallel to one another. Layer 3-space by horizontal planes, and fill each plane by parallel lines, with the angle changing at a constant rate with respect to the height of the plane.}
        \label{Fig:ExampleFWH}
      \end{figure}
      
    \subsection{Acknowledgments}
      This article is an extended version of a portion of my doctoral dissertation. I am very grateful to my advisor Herman Gluck, without whose encouragement, suggestions, and just overall support and cameraderie, this would never have been written.
      
      Thanks as well to the Math Department at the University of Pennsylvania for their support during my time there as a graduate student.
    
  \section{Main Theorem}\label{Sec:HopfMainThm}
    \begin{mainthm}\label{Thm:HopfFWH}
      Let $\F$ be a fiberwise homogeneous $C^1$ fibration of $S^n$ with its standard metric by subspheres of dimension $k$, i.e.
      \begin{itemize}
        \item $S^3,S^5,S^7,\ldots$ by circles
        \item $S^7,S^{11},S^{15},\ldots$ by 3-spheres
        \item $S^{15}$ by 7-spheres.
      \end{itemize}
      Then $\F$ is a Hopf fibration.
    \end{mainthm}
        
    Here is a summary of the proof. First we use the classification of homogeneous spaces to show that the base space of a fiberwise homogeneous fibration $\F$ is diffeomorphic to the base of a Hopf fibration with the same total space and the same fiber dimension. We call this the \textbf{\textit{Hopf model}} for $\F$. Then we use a theorem of Montgomery-Samelson and Borel, and generalized by Oniscik, to find a list of the compact Lie groups which can act transitively and effectively on the base space of $\F$. We find that these groups (or covers of them) are isomorphic to subgroups of the symmetry group of the Hopf model for $\F$. We then prove a lemma that the symmetry group of $\F$ acts irreducibly on the Euclidean space in which the ambient round sphere is embedded. Following that, we use the representation theory of compact Lie groups to show that the action of the symmetry group is standard; i.e., is the same as the action of the symmetry group of the Hopf model. Finally we show that $\F$ must actually have a fiber in common with its Hopf model, and hence must be identical to it.
    
    \begin{prop}[Follows from Theorem 7.50 in Besse \cite{besse1978manifolds}, citing work of Wang, Borel, and Singh Varma]\label{Prop:BaseDiff}
      Let $\F$ be a fiberwise homogeneous $C^1$ fibration of $S^n$ by $k$-spheres. Then the base of $\F$ is diffeomorphic to the base of the Hopf fibration of the same dimension.
    \end{prop}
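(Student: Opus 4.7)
The plan is to realize the base $B$ as a compact homogeneous space of a compact connected Lie group, upgrade this to a full homogeneous sphere-bundle structure on $S^n$, and then invoke the Wang--Borel--Singh Varma classification collected in Besse's Theorem 7.50.

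First I would let $G\subset O(n+1)$ denote the group of fiber-preserving isometries of $(S^n,g)$. Because $\F$ is $C^1$, preserving $\F$ is a closed condition on $O(n+1)$, so $G$ is a compact Lie group; let $G^0$ be its identity component. Fiberwise homogeneity is precisely the statement that $G$ acts transitively on $B$, and since $B$ is the connected continuous image of $S^n$ and the $G^0$-orbits in $B$ are open, $G^0$ itself is transitive on $B$. Fixing a fiber $F_0$ with stabilizer $H\leq G^0$, this presents $B=G^0/H$ as a compact homogeneous space of a compact connected Lie group.

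Next I would upgrade to a transitive action of $G^0$ on $S^n$ itself, so that $\F$ becomes a homogeneous sphere bundle. Picking $p\in F_0$, its $G^0$-orbit projects onto $B$ and is a smooth submanifold of $S^n$; if $H$ acts transitively on $F_0$, then this orbit meets each fiber in all of it and so equals $S^n$. Transitivity of $H$ on $F_0$ should follow from fiberwise homogeneity by a local argument: a $C^1$ tubular neighborhood of $F_0$ lets one assemble a transitive family of isometries of $F_0$ out of the isometries of $S^n$ identifying $F_0$ with nearby fibers. At this point the classification applies: Wang, Borel, and Singh Varma, as summarized in Besse's Theorem 7.50, assert that every homogeneous fibration of a round sphere by spheres is equivalent as a homogeneous bundle to one of the three Hopf fibrations. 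In particular $B$ is diffeomorphic to $\C P^m$, $\HH P^m$, or $S^8$ according to the dimensions of $S^n$ and the fibers.

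I expect the middle step to be the main obstacle. Fiberwise homogeneity is a global condition on pairs of fibers, whereas transitivity of $H$ on a single fiber $F_0$ is a local condition; extracting the latter from the former is where the $C^1$ hypothesis on $\F$ is essential. The remaining steps are essentially setup and invocation of the cited classification.
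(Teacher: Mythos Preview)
Your proposal diverges from the paper's proof at precisely the point you flag as the main obstacle, and the gap there is real. Fiberwise homogeneity gives you isometries carrying one fiber to another; it gives you nothing directly about isometries of a single fiber to itself. Your sketch---composing an isometry $F_0\to F_1$ with one $F_1\to F_0$ to get a self-map of $F_0$---produces \emph{some} elements of $H$, but there is no mechanism forcing these to act transitively on $F_0$. The $C^1$ tubular neighborhood does not help: it controls how nearby fibers sit relative to $F_0$, not how the stabilizer moves points along $F_0$. A posteriori (by the Main Theorem) the symmetry group \emph{does} act transitively on $S^n$, but establishing this at the outset essentially amounts to proving the whole theorem, and the paper in fact spends the Key Lemma and irreducibility argument on exactly this upgrade.

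The paper sidesteps your middle step entirely. It never claims $G^0$ is transitive on $S^n$; it only uses that $G^0$ is transitive on the base $B$, so that $B$ is a compact homogeneous space. The missing ingredient is then supplied topologically: the Serre spectral sequence of the fibration $S^k\to S^n\to B$ forces $H^*(B)$ to agree with that of $\C P^m$, $\HH P^m$, or $S^8$ as appropriate. Besse's Theorem~7.50 (Wang, Borel, Singh Varma) is being invoked not as a classification of homogeneous sphere bundles over spheres, but as a classification of compact simply connected homogeneous spaces with the integral cohomology ring of a compact rank-one symmetric space---and the conclusion is that such a space is diffeomorphic to that symmetric space. So the two inputs are ``$B$ is homogeneous'' and ``$B$ has the right cohomology,'' and neither requires transitivity on the total sphere.
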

    \begin{proof}
      The cohomology ring of the base of $\F$ is identical to that of its Hopf model. This follows from the Serre spectral sequence; see \cite[Example 1.15]{hatcher2004spectral} for a sample computation. The base of $\F$ also has the structure of a homogeneous space. By Theorem 7.50 in Besse \cite{besse1978manifolds}, the base of $\F$ is diffeomorphic to the base of its Hopf model.
    \end{proof}
    \begin{prop}[\cite{onishchik1963transitive}, Theorem 6 parts (a), (b), and (f), and Table 2]\label{Prop:EffAct}
      Let $G$ be a compact connected Lie group acting transitively and effectively on $M$.
      \begin{itemize}
        \item Let $M=\C P^n$. Then $G=SU(n+1)/\Z_{n+1}$ or possibly $G=Sp(k+1)/\Z_2$ if $n=2k+1$.
        \item Let $M=\HH P^n$. Then $G=Sp(n+1)/\Z_2$.
        \item Let $M=S^8$. Then $G=SO(9)$.
      \end{itemize}
      Notice that, if $M=S^2$ ($=\C P^1$) then $G=SO(3)$ ($=SU(2)/\Z_2 = Sp(1)/\Z_2$), and if $M=S^4$ ($=\HH P^1$) then $G=SO(5)$ ($=Sp(2)/\Z_2$).
    \end{prop}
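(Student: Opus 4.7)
The plan is to frame the problem using Oniscik's theorem on additive decompositions of Lie algebras. If $G$ acts transitively on $M = G_0/H_0$ for some reference transitive group $G_0$ (which we already know exists: $PU(n+1)$ on $\C P^n$, $Sp(n+1) \cdot Sp(1)/\Z_2$ on $\HH P^n$, and $O(9)$ on $S^8$), then transitivity of $G$ means $G \cdot H_0 = G_0$, which at the Lie algebra level becomes $\mathfrak{g}_0 = \mathfrak{g} + \mathfrak{h}_0$ (as vector spaces, generally not a direct sum). Conversely, any subalgebra $\mathfrak{g} \subset \mathfrak{g}_0$ with this property integrates to a connected subgroup acting transitively. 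So the problem reduces to classifying Lie subalgebras satisfying this additive decomposition, subject to the effectivity condition that $\mathfrak{g} \cap \mathfrak{h}_0$ contain no nonzero ideal of $\mathfrak{g}$.

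For $M = S^8$, the claim reduces directly to the classical Montgomery--Samelson--Borel classification of compact connected Lie groups acting transitively on spheres. Restricted to $S^8$, that list contains only $SO(9)$: no unitary or symplectic action is possible since $9$ is odd, and the exceptional actions of $G_2$, $Spin(7)$, and $Spin(9)$ occur on $S^6$, $S^7$, and $S^{15}$ respectively.

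For $M = \C P^n$ and $\HH P^n$, I would proceed in three steps. First, reduce to the case where $\mathfrak{g}$ is semisimple: any central torus in $G$ would have to act trivially (these symmetric spaces have isometry groups with finite center and the isotropy representation is irreducible), contradicting effectivity. Second, bound $\dim G$ below by $\dim M$ and above by $\dim \Isom(M)$, cutting the list of candidate simple or semisimple $\mathfrak{g}$ down to a short one via the classification of compact simple Lie algebras. Third, for each candidate $\mathfrak{g}$, enumerate subalgebras $\mathfrak{h}$ of the correct codimension and check whether $\mathfrak{g}/\mathfrak{h}$ is diffeomorphic to $M$ by comparing isotropy representations and cohomology rings (via the Serre spectral sequence of the fibration $H \to G \to G/H$).

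The main obstacle is verifying that no exceptional simple group $G_2, F_4, E_6, E_7, E_8$ acts transitively on $\C P^n$ or $\HH P^n$ outside the listed cases. Dimension counting eliminates most candidates immediately, but the genuine nontrivial example must be understood carefully: the alternative $Sp(k+1)/\Z_2$ on $\C P^{2k+1}$ arises from the fibration $\C P^{2k+1} \to \HH P^k$ with $S^2$ fibers, under which $Sp(k+1)$ acts with isotropy $Sp(k) \times U(1)$. Confirming that no analogous exceptional example occurs is where Oniscik's explicit tables of maximal subalgebras and their embeddings must be invoked in detail, and this enumeration is the genuinely hard step.
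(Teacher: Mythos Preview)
The paper does not prove this proposition at all: it is stated with a direct citation to Oniscik \cite{onishchik1963transitive} (Theorem~6 parts (a), (b), (f), and Table~2) and no argument is given. So there is no ``paper's own proof'' to compare against; the author treats this classification as a black box imported from the literature.

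Your outline is a reasonable summary of the strategy behind Oniscik's original argument: reduce transitivity to the additive decomposition $\mathfrak{g}_0 = \mathfrak{g} + \mathfrak{h}_0$, impose effectivity as the absence of ideals in $\mathfrak{g}\cap\mathfrak{h}_0$, and then run through the candidate simple and semisimple subalgebras using dimension bounds and the known maximal subalgebras of the classical and exceptional algebras. Your handling of $S^8$ via Montgomery--Samelson--Borel is correct and self-contained. For $\C P^n$ and $\HH P^n$ you have correctly identified both the one genuine nontrivial case ($Sp(k+1)/\Z_2$ on $\C P^{2k+1}$) and the place where the real work lies, namely the exhaustive exclusion of exceptional groups via Oniscik's tables. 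That said, what you have written is a plan rather than a proof: the enumeration you defer to ``Oniscik's explicit tables'' is precisely the content of the cited theorem, so your proposal ultimately rests on the same reference the paper invokes. If the goal were to make the paper self-contained here, substantially more detail would be needed in step three; if the goal is to match the paper, simply citing Oniscik (and Montgomery--Samelson--Borel for the sphere case) is what the author does.
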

    In what follows, whenever we write that $G$ is a Lie group acting transitively on a fibration $\F$, we take $G$ to be closed and connected. We are justified in simplifying our life in this way because of the following lemma:
    \begin{lem}\label{Lem:ClosedConnected}
      Let $M$ be a connected Riemannian manifold, and let $G$ be a subgroup of $\Isom(M)$. Denote by $\overline{G}_0$ the identity component of the closure of $G$. Suppose $G$ acts transitively on a smooth fibration $\F$ of $M$. Then, $\overline{G}_0$ acts transitively on $\F$ as well.
    \end{lem}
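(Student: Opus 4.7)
The plan is to proceed in three steps: first, identify the group of fiber-preserving isometries as a closed subgroup of $\Isom(M)$; second, deduce that $\overline{G}$ acts transitively on the base $B$ of $\F$; third, and this is the main step, upgrade this to transitivity of $\overline{G}_0$ via a dimension-and-connectedness argument.

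For the first step, let $\pi:M\to B$ denote the fibration projection. I would characterize the subgroup $\Isom_\F(M)\subseteq\Isom(M)$ of fiber-preserving isometries by the condition that $\pi(g(p))=\pi(g(q))$ whenever $\pi(p)=\pi(q)$. This is a pointwise closed condition on $g$ under the compact-open topology on $\Isom(M)$, so $\Isom_\F(M)$ is closed; hence $\overline{G}\subseteq\Isom_\F(M)$ and the induced action of $\overline{G}$ on $B$ is well-defined and smooth. Since $\overline{G}\supseteq G$ and $G$ is transitive on $B$ by hypothesis, $\overline{G}$ is transitive on $B$ as well.

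For the third step, by Myers--Steenrod $\Isom(M)$ is a Lie group, so the closed subgroup $\overline{G}$ is a Lie group and $\overline{G}_0$ is a closed normal Lie subgroup with the same Lie algebra as $\overline{G}$. Fix $b_0\in B$. The stabilizer $\operatorname{Stab}_{\overline{G}_0}(b_0)=\operatorname{Stab}_{\overline{G}}(b_0)\cap\overline{G}_0$ has the same Lie algebra as $\operatorname{Stab}_{\overline{G}}(b_0)$, namely the set of $X$ in the common Lie algebra with $\exp(tX)\cdot b_0=b_0$ for all $t$. By the orbit-stabilizer dimension formula,
$$\dim(\overline{G}_0\cdot b_0)=\dim\overline{G}_0-\dim\operatorname{Stab}_{\overline{G}_0}(b_0)=\dim\overline{G}-\dim\operatorname{Stab}_{\overline{G}}(b_0)=\dim B.$$
Thus each $\overline{G}_0$-orbit in $B$ is an immersed submanifold of full dimension, hence an open subset of $B$. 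These orbits partition $B$ into pairwise disjoint open sets; since $B$ is connected (being a continuous image of the connected manifold $M$), there is exactly one orbit, and $\overline{G}_0$ acts transitively on $B$.

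The main obstacle is precisely the third step: a priori, $\overline{G}$ could have many (possibly infinitely many) components, and transitivity of $\overline{G}$ does not formally imply transitivity of its identity component. The crucial input is that $\overline{G}_0$ and $\overline{G}$ share the same Lie algebra, so their orbits through any given point have the same dimension; this forces each $\overline{G}_0$-orbit to be open in $B$, and then connectedness of $B$ delivers the conclusion. Nothing beyond standard Lie theory and the smooth structure on $B$ is required.
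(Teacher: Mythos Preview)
Your proof is correct and takes a genuinely different route from the paper in the main step. Both of you first pass from $G$ to $\overline{G}$ by noting that fiber-preservation is a closed condition (the paper phrases this with sequences, you phrase it as $\Isom_\F(M)$ being a closed subgroup; these are equivalent). The divergence is in passing from $\overline{G}$ to $\overline{G}_0$. The paper argues combinatorially with cosets: writing $B\cong \overline{G}/H$ with $H$ the isotropy, it observes that $H$ must meet every connected component $G_i$ of $\overline{G}$ (else $B$ would be disconnected), picks $g_i\in H\cap G_i$, and checks directly that the image of $G_0$ in $\overline{G}/H$ coincides with the image of every $G_i$. You instead argue by dimension: since $\overline{G}_0$ and $\overline{G}$ share a Lie algebra, their stabilizers at $b_0$ do too, so every $\overline{G}_0$-orbit in $B$ has full dimension and is therefore open; connectedness of $B$ finishes.

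Your approach is cleaner and avoids the component-by-component bookkeeping; it is also the standard ``identity component acts transitively on a connected homogeneous space'' argument. The paper's approach, on the other hand, is slightly more elementary in that it uses only the topological coset structure of $\overline{G}/H$ and does not invoke the orbit--stabilizer dimension formula or the smooth structure on $B$. Both are entirely adequate here.
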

    \begin{proof}
      Let $G$ preserve the fibers of $\F$. Let $\{g_n\}_{n=1}^{\infty} \subset G \subseteq \Isom(M)$, and let $g_n \to g\in \Isom(M)$. If each $g_n$ preserves each fiber of $\F$, then the limit $g$ clearly does as well. Thus $\overline{G}$ preserves the fibers of $\F$. Also $G\subseteq \overline{G}$, so $\overline{G}$ acts transitively on $\F$ as well.
      
      Now let $G$ be a closed disconnected Lie subgroup of $\Isom(M)$ acting transitively on $\F$. Let the manifold $B$ be the base of $\F$. Then $B$ has the structure of a connected homogeneous space, and hence is diffeomorphic to $G/H$, where $H$ is the isotropy subgroup of $G$ fixing a point. Let $G_i$ be the connected components of $G$, $G_0$ the identity component. The subgroup $H$ intersects every $G_i$, or else $B$ would be disconnected, so there are $g_i\in H\cap G_i$ for all $i$. Then $g_i H = H$, from which it follows that the image of $G_0$ intersects the image of every $G_i$ in $G/H$. But as $g_0$ ranges across all elements of $G_0$, $g_0 g_i$ ranges across all elements of $G_i$, so $g_0 g_i H = g_0 H$ and the image of $G_0$ is identical to the image of each $G_i$. Thus the image of $G_0$ is all of $G/H$. Therefore the identity component of $G$ acts transitively on $B$ and hence on $\F$.
    \end{proof}
    From now on, $G$ will always denote a closed connected Lie group.
    \begin{lem}\label{Lem:Irred}
      Let $\F$ be a fiberwise homogeneous $C^1$ fibration of $S^n$ by $k$-spheres. Let $G\subset SO(n+1)$ act transitively on $\F$. Then the action of $G$ on $\R^{n+1}$ is irreducible.
    \end{lem}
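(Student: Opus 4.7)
The plan is to argue by contradiction using a dimension count on $G$-orbits. Suppose the action of $G$ on $\R^{n+1}$ is reducible. Because $G \subset SO(n+1)$ preserves the standard inner product, we may write $\R^{n+1} = V_1 \oplus V_2$ as an orthogonal direct sum of two proper nonzero $G$-invariant subspaces, with $d_i := \dim V_i \geq 1$ and $d_1 + d_2 = n + 1$.

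The base $B$ of $\F$ has dimension $n - k$, and since $G$ is fiber-preserving and transitive on $\F$, the induced $G$-action on $B$ is transitive. For any $p \in S^n$, the $G$-equivariant projection $\pi \colon S^n \to B$ sends $G \cdot p$ onto $B$, and since a smooth surjection between manifolds cannot decrease dimension, $\dim(G \cdot p) \geq n - k$. Choosing $p_i \in V_i \cap S^n$ (possible because $d_i \geq 1$), the orbit $G \cdot p_i$ is contained in the subsphere $V_i \cap S^n$, giving $\dim(G \cdot p_i) \leq d_i - 1$. Combining yields $d_i \geq n - k + 1$ for both $i$, and summing,
\[
  n + 1 \;=\; d_1 + d_2 \;\geq\; 2(n - k + 1),
\]
i.e., $n \leq 2k - 1$.

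Finally, I would check that this bound is violated in every case of the Main Theorem: circles in $S^{2m+1}$ with $m \geq 1$ give $n \geq 3 > 1 = 2k-1$; three-spheres in $S^{4m+3}$ with $m \geq 1$ give $n \geq 7 > 5$; and seven-spheres in $S^{15}$ give $n = 15 > 13$. In each case the inequality $n \leq 2k - 1$ fails, so the reducibility assumption is untenable and $G$ must act irreducibly. There is no deep obstacle here; the only technical points are the transitivity of $G$ on $B$ (immediate from fiber-transitivity, since $G$ descends to a continuous action on the base) and the standard fact that a smooth surjection between manifolds does not decrease dimension.
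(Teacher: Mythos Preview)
Your proof is correct and follows essentially the same approach as the paper's: assume a nontrivial $G$-invariant splitting, observe that the invariant subsphere $V_i\cap S^n$ (or in the paper's phrasing, $S(A)$) must surject onto the base $B$ because $G$ is transitive on fibers, and derive a contradiction from the resulting dimension inequality. The only cosmetic difference is that you apply the bound to both summands and sum, whereas the paper takes WLOG the smaller summand; the resulting inequality $n\leq 2k-1$ and the case check are the same.
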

    \begin{proof}
      We count dimensions. Suppose we have a nontrivial splitting $R^{n+1}=A\oplus B$, with $A,B$ being $G$-invariant subspaces. Then WLOG we have
      \[ 1\leq \dim A \leq \frac{1}{2}(n+1).\]
      Every fiber of $\F$ must meet the unit sphere $S(A)$ in $A$. This sphere satisfies
      \[ 0 \leq \dim S(A) \leq \frac{1}{2}(n-1). \]
      But the base of $\F$ has dimension at least
      \[ \frac{1}{2}(n+1) > \dim S(A), \]
      as we see this by checking each of the possibilities for $k$ and $n$ case by case. This is a contradiction because the fibers may not intersect one another. Thus there cannot be a nontrivial splitting by $G$-invariant subspaces, and so $G$ acts irreducibly.
    \end{proof}
    Now we give a Key Lemma.
    \keylemma
    The proof only involves standard representation theory of compact Lie groups. We defer the proof of the Key Lemma to Section~\ref{Sec:RepnTheory} so that we don't get bogged down. We now split the Main Theorem into four smaller cases, and prove each separately.
    \begin{thm}\label{Thm:3Sphere}
      Let $\F$ be a fiberwise homogeneous $C^1$ fibration of the 3-sphere by circles (i.e.~1-spheres). Then $\F$ is the Hopf fibration.
    \end{thm}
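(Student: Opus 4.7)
The plan is to use the general machinery of this section to locate a copy of $SU(2)$ inside $G$ acting in the standard way on $\R^4$, and then pin down $\F$ directly by classifying the $SU(2)$-invariant $1$-dimensional distributions on $S^3$.

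First I would combine Lemmas~\ref{Lem:ClosedConnected} and \ref{Lem:Irred} to reduce to the case that $G$ is closed, connected, and acts irreducibly on $\R^4$. Proposition~\ref{Prop:BaseDiff} identifies the base of $\F$ with $S^2 = \C P^1$, and Proposition~\ref{Prop:EffAct} then forces $G/H \cong SU(2)/\Z_2 = SO(3)$ (the $Sp(1)/\Z_2$ possibility is the same group in this low-dimensional case). Invoking part~(1) of the Key Lemma with $n=1$, $G$ must contain a copy of $SU(2)$ acting standardly on $\R^4 = \C^2$.

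Next I would reinterpret this standardly-acting $SU(2)$ quaternionically. Identifying $\C^2$ with $\HH$ via $(z_1,z_2)\leftrightarrow z_1+z_2 j$, the standard $SU(2) \subset SO(4)$ becomes the subgroup acting on $S^3 \subset \HH$ by right quaternion multiplication (by inverses), and this subgroup acts simply transitively on $S^3$. The heart of the argument is then the following classification: an $SU(2)$-invariant $1$-dimensional distribution on $S^3$, trivialized by left translation, has the form $D_q = \R v\cdot q$ for a single fixed line $\R v \subset \mathfrak{su}(2)$; integrating gives the leaves $\{e^{tv}q_0 : t \in \R\}$, which are the great-circle fibers of a Hopf fibration of $S^3$ with axis $v$.

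To finish, the $C^1$ fibration $\F$ determines a continuous, $G$-invariant, and therefore $SU(2)$-invariant, tangent line field on $S^3$; by the classification this line field is tangent to a Hopf fibration, and by unique integrability of $1$-dimensional distributions $\F$ must coincide with it. I expect the main obstacle to be the Key Lemma itself (deferred to Section~\ref{Sec:RepnTheory}); once $SU(2) \subset G$ is in hand, the remaining geometric step is a direct computation using the simply transitive action of $SU(2)$ on $S^3$.
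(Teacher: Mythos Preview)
Your proposal is correct and follows essentially the same route as the paper: reduce via the standard lemmas to $G/H\cong SO(3)$, invoke the Key Lemma to find $SU(2)\subset G$ acting standardly, reinterpret this as a simply transitive quaternionic action on $S^3$, and conclude that the $SU(2)$-invariant tangent line field to $\F$ is an invariant line field whose integral curves form a Hopf fibration. The only cosmetic difference is that the paper phrases the quaternionic action as left multiplication (and hence calls the line field left-invariant), whereas your identification $(z_1,z_2)\leftrightarrow z_1+z_2 j$ makes it right multiplication; your phrase ``trivialized by left translation'' should accordingly read ``right translation,'' but the formula $D_q=\R\, v\cdot q$ and the integral curves $e^{tv}q_0$ are consistent and correct.
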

    \begin{proof}
      Let $G$ be a subgroup of $SO(4)$ which acts transitively on $\F$. Let $H$ be the normal subgroup of $G$ which takes each fiber of $\F$ to itself. Then $G/H$ acts transitively and effectively on the base of $\F$. By Proposition~\ref{Prop:BaseDiff}, the base of $\F$ is diffeomorphic to $S^2$. By Proposition~\ref{Prop:EffAct}, $G/H$ is isomorphic to $SO(3)$.
      
      By Lemma~\ref{Lem:Irred}, $G$ acts irreducibly on $\R^4$. By the Key Lemma, case 1 (since $SO(3)=SU(2)/\Z_2$), $G$ contains $SU(2)=Sp(1)$ as a subgroup, acting as left multiplication by unit quaternions on $\R^4$.
      
      Let $E$ be the subbundle of $TS^3$ consisting of tangent lines to the fibers of $\F$. The group $Sp(1)$ preserves $\F$, and hence the field of tangent lines to $\F$ is left-invariant. Therefore its integral curves (i.e.~the fibers of $\F$) form a Hopf fibration. 
    \end{proof}
    \begin{thm}\label{Thm:2n+1Sphere}
      Let $\F$ be a fiberwise homogeneous $C^1$ fibration of the $2n+1$-sphere by circles, with $n\geq 2$. Then $\F$ is the Hopf fibration.
    \end{thm}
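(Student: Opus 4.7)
The plan is to follow the template of Theorem~\ref{Thm:3Sphere}, adjusted to handle the extra option appearing in Proposition~\ref{Prop:EffAct}. Let $G\subset SO(2n+2)$ be a closed connected Lie group acting transitively on $\F$, and let $H\vartriangleleft G$ be the normal subgroup preserving each fiber. By Proposition~\ref{Prop:BaseDiff} the base of $\F$ is diffeomorphic to $\C P^n$, so Proposition~\ref{Prop:EffAct} forces $G/H$ to be isomorphic either to $SU(n+1)/\Z_{n+1}$ or, only when $n = 2k+1$ is odd, to $Sp(k+1)/\Z_2$. Lemma~\ref{Lem:Irred} supplies irreducibility of $G$ on $\R^{2n+2}$, so the Key Lemma applies: it yields a distinguished subgroup $L\subset G$ acting standardly on $\R^{2n+2}$, namely $L = SU(n+1)$ in case (1) and $L = Sp(k+1)$ in case (2).

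Next I would pin down the fiber $F_p$ through a chosen point $p\in S^{2n+1}$. In either case $L$ is transitive on $S^{2n+1}$ and preserves $\F$, and (since $L$ is simple and cannot lie in $H$) the composition $L\hookrightarrow G\twoheadrightarrow G/H$ is surjective with central kernel. The subgroup of $L$ preserving $F_p$ is the preimage of the $G/H$-stabilizer of the base point $[p]$, which by the essential uniqueness of the transitive action on $\C P^n$ is conjugate to $S(U(n)\times U(1))$ in case (1) and to $Sp(k)\times U(1)$ in case (2). In both cases this fiber-stabilizer contains a natural circle $U(1)$ whose orbit through $p$ inside $\C^{n+1}$ is the full unit circle of the complex line spanned by $p$---precisely the Hopf fiber through $p$. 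Since this $U(1)$ preserves the circle $F_p$ and $U(1)\cdot p\subset F_p$, comparing circle to circle forces $F_p$ to equal the Hopf fiber through $p$.

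Finally, for any other $q\in S^{2n+1}$ transitivity of $L$ furnishes $A\in L$ with $Ap = q$; then $F_q = A\cdot F_p$ is the image under $A$ of the Hopf fiber through $p$, which is the Hopf fiber through $q$ because $L\subset U(n+1)$ preserves the Hopf fibration. This identifies $\F$ with the Hopf fibration fiber by fiber.

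The main obstacle compared with Theorem~\ref{Thm:3Sphere} is the bifurcation at the Onishchik step: the additional possibility $G/H = Sp(k+1)/\Z_2$ when $n$ is odd must be handled. The $SU(2)=S^3$-style argument via left-invariant line fields is no longer available, since for $n\geq 2$ the transitive subgroup $L$ is strictly larger than the sphere. Instead, one directly exhibits a single fiber as a $U(1)$-orbit---which happens to work uniformly in both cases---and then propagates the conclusion using equivariance under the transitive $L$-action.
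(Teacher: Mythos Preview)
Your outline matches the paper's architecture exactly through the Key Lemma: reduce to $G/H$ acting on $\C P^n$, invoke Oniscik to get the two candidates, apply the Key Lemma to produce $L=SU(n+1)$ or $L=Sp(k+1)$ acting standardly. Where you diverge is in the endgame. The paper works with the \emph{point} stabilizer: in case~(1) the isotropy $SU(n)\subset SU(n+1)$ at $p$ must fix the tangent line to $F_p$, and the only $SU(n)$-fixed direction in $T_pS^{2n+1}$ is $ip$; in case~(2) the isotropy $Sp(k)\subset Sp(k+1)$ at $x$ fixes only the span of $xi,xj,xk$, so the tangent vector is $xp$ for some purely imaginary $p$, and $Sp(k+1)$-equivariance propagates this. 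You instead analyze the full \emph{fiber} stabilizer and extract a $U(1)$ whose orbit through $p$ is a circle inside $F_p$.

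Your version works in case~(1), though the appeal to ``essential uniqueness of the transitive action on $\C P^n$'' is doing real work that you should justify: a cleaner route is to note that the fiber stabilizer contains the point stabilizer $SU(n)_p$ and has dimension exactly $n^2$, and the only $n^2$-dimensional subalgebra of $\mathfrak{su}(n+1)$ containing $\mathfrak{su}(n)_p$ is $\mathfrak{s}(\mathfrak{u}(n)\oplus\mathfrak{u}(1))_p$ (since $\mathfrak{su}(n)_p$ acts irreducibly on the complementary $\C^n$ for $n\geq 2$). In case~(2) there is a genuine slip: the circle you locate in the fiber stabilizer is some $\{ \mathrm{diag}(I_k,e^{q\theta})\}\subset Sp(1)\subset Sp(k+1)$ for an a~priori unknown unit imaginary quaternion $q$, and its orbit through $p=e_{k+1}$ is $\{(0,\ldots,0,e^{q\theta})\}$. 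This equals the Hopf circle $\{p\,e^{i\theta}\}$ for your prechosen complex structure (the one giving $Sp(k+1)\subset SU(2k+2)$) only when $q=\pm i$, which you have not shown and which in fact need not hold. The fix is easy---take the complex structure to be right multiplication by $q$ instead of $i$; then $Sp(k+1)$ still lies in the resulting $SU(2k+2)$ and your propagation step goes through---but as written the sentence ``the full unit circle of the complex line spanned by $p$'' is unjustified. The paper's tangent-vector argument sidesteps this by never fixing a complex structure in advance: it finds $v_x=xp$ and only then declares the resulting fibration Hopf.
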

    \begin{proof}
      Let $G$ be a subgroup of $SO(2n+2)$ which acts transitively on $\F$. Let $H$ be the normal subgroup of $G$ which takes each fiber of $\F$ to itself. Then $G/H$ acts transitively and effectively on the base of $\F$. By Proposition~\ref{Prop:BaseDiff}, the base of $\F$ is diffeomorphic to $\C P^n$. By Proposition~\ref{Prop:EffAct}, $G/H$ is isomorphic to $SU(n+1)/\Z_{n+1}$ or possibly to $Sp(k+1)/\Z_2$ if $n=2k+1$. By Lemma~\ref{Lem:Irred}, $G$ acts irreducibly on $\R^{2n+2}$. Consider the two cases for $G/H$ separately.
      \begin{enumerate}
        \item Suppose first that $G/H=SU(n+1)/\Z_{n+1}$. By the Key Lemma, case 1, $G$ contains $SU(n+1)$ as a subgroup, acting on $\R^{2n+2}$ in the standard way. We use the $SU(n+1)$ action to identify $\R^{2n+2}$ with $\C^{n+1}$. Fix $x\in S^{2n+1}$, and let $F_x$ be the fiber of $\F$ through $x$. The isotropy subgroup of $SU(n+1)$ fixing $x$ is isomorphic to $SU(n)$. Since $SU(n)$ preserves $\F$ and preserves $x$, it must also preserve the tangent line to $F_x$ through $x$. But the only way $SU(n)$ preserves the tangent line is if the tangent line points in the direction of $ix$. But $x$ is arbitrary, and so for all $x\in S^{2n+1}$, the fiber through $x$ points in the direction $ix$. The trajectories of this field of tangent lines form a Hopf fibration.
        
        \item Suppose instead that $n=2k+1$ and that $G/H=Sp(k+1)/\Z_2$. By the Key Lemma, case 2, $G$ contains $Sp(k+1)$ as a subgroup, acting on $\R^{2(2k+1)+2}=\R^{4k+4}$ in the standard way. Let $x\in S^{4k+3}$ be arbitrary, and $F_x$ the fiber of $\F$ through $x$. Identify $\R^{4k+4}$ with $\HH^{k+1}$ so that $Sp(k+1)$ acts as quaternionic linear transformations. The isotropy subgroup of $Sp(k+1)$ is isomorphic to $Sp(k)$, fixes the $\R^4$ spanned by $x,xi,xj,xk$, and does not fix any vector in the orthogonal $\R^{4k}$. Let $v_x$ be a unit tangent vector to $F_x$ at $x$. The isotropy subgroup $Sp(k)$ of $x$ must fix $v_x$, so we can write $v_x = xp$ for some purely imaginary quaternion $p$. But then the group $Sp(k+1)$, preserving $\F$, takes $x$ to any other point $y$ on $S^{4k+3}$, and takes the vector $v_x$ to the vector $v_y$ tangent to $F_y$ at $y$. Thus the field $xp$ at $x$ is tangent to $\F$, and so $\F$ is in fact isometric to the Hopf fibration.
      \end{enumerate}
    \end{proof}
    \begin{thm}\label{Thm:4n+3Sphere}
      Let $\F$ be a fiberwise homogeneous $C^1$ fibration of the $4n+3$-sphere by 3-spheres. Then $\F$ is the Hopf fibration.
    \end{thm}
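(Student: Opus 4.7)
The plan is to follow the same template used in Theorems~\ref{Thm:3Sphere} and~\ref{Thm:2n+1Sphere}. Let $G\subset SO(4n+4)$ act transitively on $\F$, and let $H\vartriangleleft G$ be the subgroup preserving each fiber. Then $G/H$ acts transitively and effectively on the base of $\F$, which by Proposition~\ref{Prop:BaseDiff} is diffeomorphic to $\HH P^n$. Proposition~\ref{Prop:EffAct} then forces $G/H\cong Sp(n+1)/\Z_2$. By Lemma~\ref{Lem:Irred}, $G$ acts irreducibly on $\R^{4n+4}$, so case 2 of the Key Lemma applies and $G$ contains $Sp(n+1)$ acting in the standard quaternion-linear way on $\R^{4n+4}\cong\HH^{n+1}$.

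Once the standard $Sp(n+1)$ action is in hand, the argument mirrors case 2 of Theorem~\ref{Thm:2n+1Sphere}, but now using that the fibers are $3$-dimensional. Pick any $x\in S^{4n+3}$ and let $F_x$ denote the fiber of $\F$ through $x$. The isotropy subgroup of $Sp(n+1)$ at $x$ is isomorphic to $Sp(n)$; it fixes pointwise the $\R^4$ spanned by $x,xi,xj,xk$ and has no nonzero fixed vectors in the orthogonal $\R^{4n}$. Since $Sp(n)$ preserves $\F$ and fixes $x$, it must preserve the $3$-dimensional tangent space $T_xF_x\subset T_xS^{4n+3}$. The only $3$-dimensional subspace of $T_xS^{4n+3}$ that is $Sp(n)$-invariant is $\mathrm{span}(xi,xj,xk)$, because any $Sp(n)$-invariant subspace of $\R^{4n+4}$ must split as a sum of the two isotypic components, and the intersection with $T_xS^{4n+3}$ of the trivial isotypic component is precisely this $3$-plane.

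Therefore $T_xF_x=\mathrm{span}(xi,xj,xk)$. By the transitivity of $Sp(n+1)$ on $S^{4n+3}$ and the fact that $Sp(n+1)$ preserves $\F$, this identification of the tangent $3$-plane holds at every point of $S^{4n+3}$. This $Sp(n+1)$-invariant distribution is exactly the tangent distribution of the standard Hopf fibration of $S^{4n+3}$ by quaternionic lines. Since both $\F$ and the Hopf fibration are $C^1$ foliations with the same tangent distribution, their leaves agree, and $\F$ is the Hopf fibration.

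I expect no real obstacle beyond tracking the representation-theoretic bookkeeping: the heart of the matter has already been moved into the Key Lemma, and the only genuinely new content is identifying the $Sp(n)$-invariant $3$-plane in the tangent space. The mild subtlety is verifying that $\mathrm{span}(xi,xj,xk)$ is the unique $3$-dimensional $Sp(n)$-invariant subspace of $T_xS^{4n+3}$, which follows from the fact that as an $Sp(n)$-representation $T_xS^{4n+3}$ decomposes into a trivial $3$-dimensional piece (the imaginary quaternionic line through $x$) and the standard quaternion-linear $Sp(n)$-representation on $\HH^n$, which contains no trivial subrepresentation.
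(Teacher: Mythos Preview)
Your proposal is correct and follows essentially the same route as the paper's proof: reduce to $G/H\cong Sp(n+1)/\Z_2$ via Propositions~\ref{Prop:BaseDiff} and~\ref{Prop:EffAct}, invoke irreducibility and the Key Lemma to get the standard $Sp(n+1)$ action, then use the $Sp(n)$ isotropy at $x$ to pin down the tangent $3$-plane as $\mathrm{span}(xi,xj,xk)$. Your added justification that this is the \emph{unique} invariant $3$-plane (via the isotypic decomposition of $T_xS^{4n+3}$) makes explicit a step the paper leaves implicit, and your conclusion via ``same tangent distribution $\Rightarrow$ same foliation'' is equivalent to the paper's phrasing that the Hopf fiber's tangent plane $Q_x$ satisfies the same constraint, hence $P_x=Q_x$.
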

    \begin{proof}
      Let $G$ be a subgroup of $SO(4n+4)$ which acts transitively on $\F$. Let $H$ be the normal subgroup of $G$ which takes each fiber of $\F$ to itself. Then $G/H$ acts transitively and effectively on the base of $\F$. By Proposition~\ref{Prop:BaseDiff}, the base of $\F$ is diffeomorphic to $\HH P^n$. By Proposition~\ref{Prop:EffAct}, $G/H$ is isomorphic to $Sp(n+1)/\Z_2$. By Lemma~\ref{Lem:Irred}, $G$ acts irreducibly. By the Key Lemma, case 2, $G$ contains $Sp(n+1)$ as a subgroup, acting in the standard way on $\R^{4n+4}$.
      
      Identify $\R^{4n+4}$ with $\HH^{n+1}$ so that $Sp(n+1)$ acts by quaternionic-linear transformations. Let $x\in S^{4n+3}$ be arbitrary, and let $F_x$ be the fiber of $\F$ through $x$. Let $P_x$ be the tangent 3-plane to $F_x$ at $x$. The isotropy subgroup of $Sp(n+1)$ fixing $x$ is isomorphic to $Sp(n)$, and it also fixes the 4-dimensional subspace of $\HH^{n+1}$ spanned by $x, xi, xj, xk$. The isotropy subgroup $Sp(n)$ must also fix the tangent 3-plane $P_x$, so $P_x$ must be spanned by $xi,xj,xk$. But this argument applies equally well to the 3-plane $Q_x$ tangent to the Hopf fiber through $x$. So $P_x$ coincides with $Q_x$. Thus $\F$ is identical to the Hopf fibration.
    \end{proof}
    \begin{thm}\label{Thm:15Sphere}
      Let $\F$ be a fiberwise homogeneous $C^1$ fibration of the $15$-sphere by 7-spheres. Then $\F$ is the Hopf fibration.
    \end{thm}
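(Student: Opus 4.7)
The plan is to follow the same template as the previous three theorems. Let $G\subset SO(16)$ be a closed connected subgroup acting transitively on $\F$, and let $H\vartriangleleft G$ be the normal subgroup preserving each fiber. Then $G/H$ acts transitively and effectively on the base. By Proposition~\ref{Prop:BaseDiff}, the base of $\F$ is diffeomorphic to $S^8$, so by Proposition~\ref{Prop:EffAct} we have $G/H\cong SO(9)$. Lemma~\ref{Lem:Irred} gives that $G$ acts irreducibly on $\R^{16}$, so the Key Lemma, case 3, applies: $G$ contains $Spin(9)$ as a subgroup, acting on $\R^{16}$ via the spin representation.

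Now fix $x\in S^{15}$, let $F_x$ be the fiber through $x$, and let $P_x\subset T_xS^{15}$ be the tangent $7$-plane to $F_x$ at $x$. The isotropy subgroup of $Spin(9)$ at $x$ in the spin representation is isomorphic to $Spin(7)$ (via its $8$-dimensional spin representation inside $Spin(8)\subset Spin(9)$). Since $Spin(9)$ preserves $\F$ and fixes $x$, it must preserve $P_x$. Let $Q_x$ denote the tangent $7$-plane to the octonionic Hopf fiber through $x$; the same argument shows that $Q_x$ is also $Spin(7)$-invariant.

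The key step is therefore to identify the $Spin(7)$-invariant $7$-planes inside the $15$-dimensional space $T_xS^{15}$. Under the isotropy $Spin(7)$, the tangent space decomposes as $V_7\oplus V_8$, where $V_7$ is the standard $7$-dimensional vector representation (tangent to the Hopf fiber $S^7$ through $x$) and $V_8$ is the $8$-dimensional spin representation (the horizontal direction of the Hopf fibration). These are both irreducible and are non-isomorphic, so by Schur's lemma the only $7$-dimensional $Spin(7)$-invariant subspace of $T_xS^{15}$ is $V_7=Q_x$. Hence $P_x=Q_x$.

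This identification holds at every $x\in S^{15}$, so $\F$ and the octonionic Hopf fibration have the same field of tangent $7$-planes, hence the same fibers. The main obstacle is the representation-theoretic identification of the isotropy action on the tangent space; once the branching $Spin(9)\downarrow Spin(7)$ on the spin representation is understood as $V_7\oplus V_8$ with distinct irreducible summands, uniqueness of the invariant $7$-plane is immediate and the theorem follows.
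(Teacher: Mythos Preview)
Your proof is correct and follows essentially the same route as the paper: identify $G/H\cong SO(9)$, apply the Key Lemma to get $Spin(9)$ acting by the spin representation, then use the $Spin(7)$ isotropy decomposition $T_xS^{15}\cong V_7\oplus V_8$ (the paper cites Ziller for this) to pin down the unique invariant $7$-plane. One small slip: where you write ``Since $Spin(9)$ preserves $\F$ and fixes $x$,'' you mean the isotropy subgroup $Spin(7)$, not $Spin(9)$.
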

    \begin{proof}
      Let $G$ be a subgroup of $SO(16)$ which acts transitively on $\F$. Let $H$ be the normal subgroup of $G$ which takes each fiber of $\F$ to itself. Then $G/H$ acts transitively and effectively on the base of $\F$. By Proposition~\ref{Prop:BaseDiff}, the base of $\F$ is diffeomorphic to $S^8$. By Proposition~\ref{Prop:EffAct}, $G/H$ is isomorphic to $SO(9)$.

      By Lemma~\ref{Lem:Irred}, $G$ acts irreducibly on $\R^{16}$, and by the Key Lemma, $G$ contains a subgroup isomorphic to $Spin(9)$ which acts as the spin representation on $\R^{16}$.

      Let $x\in S^{15}$ be arbitrary, and let $F_x$ be the fiber of $\F$ through $x$. Let $P_x$ be the tangent 7-plane to $F_x$ through $x$. The isotropy subgroup of $Spin(9)$ which fixes $x$ is $Spin(7)$, and it acts on the orthogonal $\R^{15}$ as the sum of the $SO(7)$ action on an $\R^7$ and the spin representation on an $\R^8$, see Ziller \cite{ziller1982homogeneous}. The tangent 7-plane $P_x$ must be fixed by the isotropy action, and hence must lie in the $\R^7$. But this argument applies equally well to the 7-plane $Q_x$ tangent to the Hopf fiber through $x$. So $P_x$ coincides with $Q_x$. Thus $\F$ is identical to the Hopf fibration.
    \end{proof}
    This concludes the proof of the Main Theorem. Those readers who are interested in the proof of the Key Lemma can read Section~\ref{Sec:RepnTheory}. Those who are not may safely ignore it.
    
  \section{Representation Theory}\label{Sec:RepnTheory}
    We have saved the proof of the Key Lemma for this section, so that we can black-box the representation theory of compact Lie groups. The proof is totally standard. Our use for it is unique enough that we are unlikely to find its exact statement in the literature. The main tool we will use is a comprehensive list of low-dimensional irreducible representations of the classical compact Lie groups, due to Andreev-Vinberg-\'Elashvili \cite{andreev1967orbits}. But we will also need to deal with the minor irritation that their list is of complex representations, and for our purposes we need to know about real irreducible representations. We will also need to say a little about irreducible representations of product groups.
    
    We make use of the fact that complex irreducible representations of a compact Lie group are of ``real type'' or ``complex type'' or ``quaternionic type.'' We leave as a black box the precise meanings of these terms. See Br\"ocker and tom Dieck \cite{brocker1985representations} for more details.
    
    Let $G$ be a compact Lie group, and let $\rho:G\to GL(\R^n)$ be an irreducible representation. Let $\rho_{\C}:G\to GL(\C^n)$ be $\rho$ followed by the natural inclusion $GL(\R^n)\hookrightarrow GL(\C^n)$. Conversely, let $\pi:G\to GL(\C^n)$ be a complex irreducible representation, and let $\pi_{\R}:G\to GL(\R^{2n})$ be the result of forgetting the complex structure on $\pi$.
    \begin{prop}[Theorem 6.3 in \cite{brocker1985representations}]\label{Prop:RealIrrep}
      Let $\rho:G\to GL(n,\R)$ be irreducible. One of two possibilities holds:
      \begin{enumerate}
        \item $\rho_{\C}$ is a complex irreducible representation of $G$ of dimension $n$, of real type.
        \item $\rho$ is equal to $\pi_{\R}$ for some complex irreducible representation $\pi$ of $G$ of dimension $n/2$, of complex or quaternionic type.
      \end{enumerate}
    \end{prop}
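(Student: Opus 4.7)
The plan is to complexify $\rho$ and exploit the resulting $G$-equivariant complex conjugation. Let $\rho_\C$ act on $\C^n = \R^n \otimes_\R \C$, and let $c:\C^n\to\C^n$ denote complex conjugation with respect to the real form $\R^n$. Since $\rho$ is defined over $\R$, the map $c$ is a $G$-equivariant antilinear involution, and this is the central structural fact that drives everything. The proof splits into two cases according to whether $\rho_\C$ is irreducible.

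If $\rho_\C$ is irreducible, then it is a complex irreducible representation of dimension $n$ which admits a $G$-invariant real form (namely $\R^n$ itself), so by the definition of real type we are in case~(1). The substantive case is when $\rho_\C$ is reducible. Pick a proper nonzero $G$-invariant complex subspace $W\subset\C^n$. Then $\overline W := c(W)$ is also $G$-invariant, and both $W+\overline W$ and $W\cap\overline W$ are stable under $c$, hence of the form $V\otimes_\R \C$ for some $G$-invariant real subspace $V\subseteq \R^n$. The irreducibility of $\rho$ forces each of these subspaces to be $0$ or all of $\C^n$; combined with $0\subsetneq W\subsetneq \C^n$, we conclude $W+\overline W=\C^n$ and $W\cap\overline W=0$, giving a decomposition $\C^n=W\oplus\overline W$ with $\dim_\C W = n/2$.

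Next I would show that $\pi:=\rho_\C|_W$ is itself irreducible: any proper nonzero $G$-invariant $W'\subset W$ would, by the same conjugation argument, satisfy $W'\oplus \overline{W'}=\C^n$, but then $W'\subset W$ forces $W'=W$ by a dimension count. To identify $\rho$ with $\pi_\R$, use the projection $\R^n\hookrightarrow\C^n=W\oplus\overline W\twoheadrightarrow W$: it is $\R$-linear and $G$-equivariant, and its kernel is $\R^n\cap\overline W$, which lies in $\R^n\cap W\cap\overline W=0$ (since any real vector in $W$ is fixed by $c$ and hence also lies in $\overline W$). Since both sides have real dimension $n$, this projection is an isomorphism of real $G$-representations, so $\rho\cong\pi_\R$.

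Finally, I would rule out $\pi$ being of real type: if $\pi$ admitted a $G$-invariant real form $W_0\subset W$, then $W_0$ and $iW_0$ would be complementary real $G$-invariant subspaces, so $\pi_\R$ would be reducible, contradicting the irreducibility of $\rho\cong\pi_\R$. Hence $\pi$ must be of complex or quaternionic type, placing us in case~(2). The only potentially delicate step is the identification $\rho\cong \pi_\R$, since one has to be careful that the projection from $\R^n$ onto $W$ is genuinely an $\R$-linear isomorphism and not merely an injection; everything else is a clean application of the interaction between $c$ and $G$-invariance.
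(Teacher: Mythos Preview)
Your argument is correct and self-contained. Note, though, that the paper does not actually prove this proposition: it is quoted verbatim as Theorem~6.3 from Br\"ocker and tom Dieck and used as a black box, so there is nothing in the paper to compare your approach against. What you have written is essentially the standard proof one finds in that reference: exploit the $G$-equivariant conjugation $c$ on the complexification to either certify a real structure on $\rho_\C$ (case~(1)) or, when $\rho_\C$ is reducible, split $\C^n = W\oplus \overline W$ and identify $\rho$ with $(\rho_\C|_W)_\R$ (case~(2)).

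One small slip worth cleaning up: in your parenthetical justifying that the kernel $\R^n\cap\overline W$ of the projection $\R^n\to W$ vanishes, what you need is that any real vector in $\overline W$ also lies in $W$, not the direction you wrote. The fix is immediate and symmetric: if $v\in\R^n\cap\overline W$ then $v=c(v)\in c(\overline W)=W$, hence $v\in W\cap\overline W=0$.
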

    \begin{prop}[Lemma 4 and Table 1 in \cite{andreev1967orbits}, Theorems IX.10.5-7 in \cite{simon1996representations}]\label{Prop:IrrepList}
      Let $G$ be a compact simple Lie group. What follows is a complete list of the irreducible representations of $G$ in $GL(\C^d)$ with dimension $d$ less than $\dim G$, and their type (i.e.~real or complex or quaternionic).
      \begin{itemize}
        \item $G=SU(n+1), \dim G=n^2+2n$. Let $n \geq 2$. One irreducible representation each of dimension $n+1$ (the defining representation), $(n+1)(n+2)/2$, $n(n+1)/2$. They're all of complex type, except that when $n=3$, the last is of real type. When $n=2$, the last one is equivalent to the defining representation. When $n=5,6,7$, there is an additional representation of dimension $(n-1)n(n+1)/6$, which is quaternionic for $n=5$, and of complex type otherwise.
        \item $G=Sp(n), \dim G=n(2n+1)$. One irreducible representation of dimension $2n$ of quaternionic type, and one of dimension $2n^2-n-1$ of real type. When $n=3$, there's an additional one of dimension 14 of quaternionic type.
        \item $G=Spin(9), \dim G=36$. One irreducible representation of dimension 9, one of dimension 16, both of real type.
      \end{itemize}
    \end{prop}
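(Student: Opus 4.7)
The plan is to invoke the highest-weight classification of complex irreducible representations of a compact simple Lie group, use the Weyl dimension formula to enumerate those with dimension less than $\dim G$, and then separately determine the real/complex/quaternionic type of each survivor. Fix a maximal torus $T \subset G$, a system of simple roots $\alpha_1, \ldots, \alpha_r$ (where $r$ is the rank of $G$), and the corresponding fundamental weights $\omega_1, \ldots, \omega_r$. Each irreducible complex representation of $G$ is $V_\lambda$ for a unique dominant weight $\lambda = \sum a_i \omega_i$ with $a_i \in \mathbb{Z}_{\geq 0}$, and its dimension is
\[ \dim V_\lambda = \prod_{\alpha > 0} \frac{\langle \lambda + \rho, \alpha \rangle}{\langle \rho, \alpha \rangle}, \]
where $\rho = \tfrac{1}{2}\sum_{\alpha>0} \alpha$. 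Since this is a polynomial of degree $|\Phi^+|$ in the $a_i$ with positive coefficients, only finitely many $\lambda$ satisfy $\dim V_\lambda < \dim G$, and these can be listed by inspection.

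Carrying this out case by case: for $G = SU(n+1)$ (rank $n$, dimension $n^2 + 2n$), the fundamental representations $\Lambda^k \mathbb{C}^{n+1}$ have dimensions $\binom{n+1}{k}$, and combined with the symmetric square and a short search over the remaining dominant weights, the only ones with dimension less than $n^2 + 2n$ are the ones listed: the defining representation of dimension $n+1$, its symmetric square of dimension $(n+1)(n+2)/2$, its exterior square of dimension $n(n+1)/2$, and the sporadic $\Lambda^3 \mathbb{C}^{n+1}$ of dimension $(n-1)n(n+1)/6$ which is small enough only for $n \leq 7$. For $G = Sp(n)$ (rank $n$, dimension $n(2n+1)$), the fundamental weights give the defining representation of dimension $2n$ and the primitive part of $\Lambda^2 \mathbb{C}^{2n}$ of dimension $2n^2 - n - 1$, plus the sporadic $\Lambda^3_0 \mathbb{C}^6$ of dimension $14$ at $n=3$. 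For $G = Spin(9) = \mathrm{Spin}(B_4)$ (dimension $36$), the two fundamental representations below the adjoint are the vector representation (dimension $9$) and the spin representation (dimension $16$).

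To assign types, I use the Frobenius–Schur indicator $\nu(V) = \int_G \chi_V(g^2)\, dg \in \{+1, 0, -1\}$, equal to $+1$, $0$, $-1$ for real, complex, and quaternionic type respectively. A representation is self-dual (equivalently $\nu \neq 0$) iff $\lambda = -w_0 \lambda$, where $w_0$ is the longest Weyl group element. For the $C_n$ and $B_n$ series one has $-w_0 = \mathrm{id}$, so all representations of $Sp(n)$ and $Spin(9)$ are self-dual; for $SU(n+1)$ with $n \geq 2$, the involution $-w_0$ is the nontrivial diagram automorphism of $A_n$, so $V_\lambda$ is self-dual iff the Dynkin label sequence is palindromic — which fails for the defining representation, its symmetric square, and most other representations on the list, yielding complex type. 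Among the self-dual survivors, real versus quaternionic is distinguished by whether the central element $-I$ (or the appropriate generator of $Z(G)$) acts trivially on $V_\lambda$: this singles out the defining representation of $Sp(n)$ and the exceptional $\Lambda^3_0$ at $n=3$ as quaternionic, the rest as real, and similarly produces the stated types for $SU(4)$ and $SU(6), SU(7), SU(8)$.

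The main obstacle is not any single computation but the bookkeeping needed to ensure completeness of the list: every small dominant weight must be ruled out, and for each representation I must correctly compute the type. The Weyl dimension formula provides explicit polynomial lower bounds on $\dim V_\lambda$ in each of the $a_i$ — typically $\dim V_\lambda$ grows at least linearly in each $a_i$ separately, and at rate at least $|\lambda|^{|\Phi^+|}/|\rho|^{|\Phi^+|}$ overall — which reduces the search to a small explicit set of weights that can be checked one by one. Once the list is confirmed, the type assignments reduce to evaluating a finite character on the center of the simply-connected cover, which is a mechanical calculation in each of the classical families.
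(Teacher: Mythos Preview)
The paper does not prove this proposition at all: it is stated purely as a citation to Andreev--Vinberg--\'Elashvili and to Simon, and is used as a black box in the proof of the Key Lemma. So there is nothing in the paper to compare your argument against, and your proposal supplies strictly more than the paper does.

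Your outline is the standard and correct one: enumerate dominant weights via the Weyl dimension formula, then assign types via self-duality ($-w_0\lambda=\lambda$) and the Frobenius--Schur indicator. Two small points of bookkeeping to tighten. First, the paper's list is implicitly up to the outer automorphism of $SU(n+1)$ (i.e.\ up to duality): that is why $\Lambda^3\mathbb{C}^{n+1}$ is recorded only for $n=5,6,7$ and not for $n=3,4$, where it is dual to the standard or to $\Lambda^2$ and hence already on the list. Your phrase ``small enough only for $n\le 7$'' is the right dimension bound, but you should also note these coincidences to match the statement exactly. Second, the rule ``quaternionic iff $-I$ acts nontrivially'' is correct as stated for $Sp(n)$, but for the self-dual $SU(n+1)$ cases (e.g.\ $\Lambda^2\mathbb{C}^4$ for $n=3$, $\Lambda^3\mathbb{C}^6$ for $n=5$) one must be a bit more careful about which central element to test; the cleanest route is the explicit formula for the Frobenius--Schur indicator in terms of $\langle\lambda,2\rho^\vee\rangle$ (or the known exceptional isomorphisms $SU(4)\cong Spin(6)$, $SU(6)\supset Sp(3)$), which immediately gives real type for $n=3$ and quaternionic for $n=5$ as claimed. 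With those two clarifications, your sketch is a complete and correct proof of the cited result.
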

    
    \begin{prop}[Theorem 3.9 in Sepanski \cite{sepanski2007compact}]\label{Prop:TensorProduct}
      Let $G$ and $H$ be compact Lie groups. A representation of $G\times H$ in $GL(n,\C)$ is irreducible if and only if it is the tensor product of an irreducible representation of $G$ with one of $H$.
    \end{prop}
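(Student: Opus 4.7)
The plan is to deploy the two standard tools of Peter--Weyl theory for compact groups: Schur orthogonality of characters (which detects irreducibility via $\|\chi\|^2 = 1$), and the isotypic decomposition of a representation of a compact group. I would handle the two directions of the biconditional separately, and in both the key point is that normalized Haar measure on $G\times H$ factors as the product of the Haar measures on $G$ and $H$.

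For the ``if'' direction, I would let $V_G$ and $V_H$ be irreducible complex representations of $G$ and $H$. Their external tensor product has character $\chi_{V_G\otimes V_H}(g,h) = \chi_{V_G}(g)\chi_{V_H}(h)$, so by Fubini, $\int_{G\times H}|\chi_{V_G\otimes V_H}|^2 = \int_G|\chi_{V_G}|^2\cdot\int_H|\chi_{V_H}|^2 = 1\cdot 1 = 1$, and then Schur orthogonality forces $V_G\otimes V_H$ to be irreducible.

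For the ``only if'' direction, I would let $W$ be an irreducible complex representation of $G\times H$, restrict to $G\times\{e\}$, and write the $G$-isotypic decomposition $W \cong \bigoplus_i V_i\otimes M_i$, where the $V_i$ are pairwise nonisomorphic complex irreducibles of $G$ and each $M_i$ is the corresponding multiplicity space. Because the $H$-action commutes with the $G$-action, $H$ preserves each isotypic summand, and by Schur's lemma it must act on $V_i\otimes M_i$ through an operator of the form $\mathrm{id}_{V_i}\otimes\sigma_i(h)$ for some representation $\sigma_i$ of $H$ on $M_i$. If more than one summand were nonzero, or if some $\sigma_i$ were reducible, $W$ would split into proper $G\times H$-invariant subspaces, contradicting irreducibility. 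Hence exactly one summand is nonzero with $\sigma_i$ irreducible, giving $W\cong V\otimes M$ with both factors irreducible.

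The only genuine care-point is the Schur-lemma step in the second direction: checking that $H$ preserves the isotypic components (because the isotypic decomposition is canonical once the $G$-action is fixed), and that within a single component $H$ must act by operators of the form $\mathrm{id}_{V_i}\otimes T$ (because the $G$-equivariant endomorphisms of $V_i\otimes M_i$ are precisely the endomorphisms of $M_i$, by Schur applied to the irreducible $V_i$). Everything else is routine compact-group representation theory, which is presumably why the paper is content to cite Sepanski for the proof.
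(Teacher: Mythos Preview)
Your proof is correct and is precisely the standard argument one finds in textbooks: character orthogonality (via Fubini on $G\times H$) for the forward direction, and the canonical $G$-isotypic decomposition plus Schur's lemma for the converse. There is nothing to fix.

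As for comparison with the paper: the paper does not actually prove this proposition. It is stated as a citation of Theorem~3.9 in Sepanski's \emph{Compact Lie Groups} and is used as a black box in the proof of the Key Lemma. So you have supplied a complete proof where the paper deliberately outsourced one; the argument you give is essentially the one found in Sepanski (and in Br\"ocker--tom Dieck, which the paper also cites), so there is no meaningful divergence in approach.
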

    \begin{prop}[follows from Theorem 5.22 in Sepanski \cite{sepanski2007compact}]\label{Prop:ProdGroup}
      Let $H$ be a normal (possibly disconnected) subgroup of a compact connected Lie group $G$, and suppose $K:=G/H$ is simple. Then there exist finite-sheeted covering groups $G'$, $H'$ of $G$, $H_0$ (the identity component of $H$), such that $G'\cong H'\times \tilde{K}$, where $\tilde{K}$ is the universal covering group of $K$.
    \end{prop}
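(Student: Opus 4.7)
The plan is to start by applying the structure theorem for compact connected Lie groups (Sepanski Theorem 5.22): some finite cover $G'$ of $G$ splits as a direct product
$G' \cong T \times S_1 \times \cdots \times S_k$,
where $T$ is a torus and each $S_i$ is a simply-connected compact simple Lie group. This is the right place to perform the algebra, so the first step is to replace $G$ by $G'$.

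Next I would analyze the preimage of $H_0$ in $G'$. Let $p:G' \to G$ be the covering map and let $\widetilde{H_0}$ be the identity component of $p^{-1}(H_0)$. Then $\widetilde{H_0}$ is a closed connected normal subgroup of $G'$, and $p$ restricts to a finite covering $\widetilde{H_0} \to H_0$ (the kernel is $\widetilde{H_0} \cap \ker p$, discrete inside a compact group, hence finite). Its Lie algebra $\widetilde{\mathfrak{h}}$ is an ideal of $\mathfrak{g}' = \mathfrak{t} \oplus \mathfrak{s}_1 \oplus \cdots \oplus \mathfrak{s}_k$; because each $\mathfrak{s}_i$ is simple, the standard characterization of ideals in such direct sums forces $\widetilde{\mathfrak{h}} = \mathfrak{t}' \oplus \bigoplus_{i\in I}\mathfrak{s}_i$ for some subspace $\mathfrak{t}' \subseteq \mathfrak{t}$ and some index set $I \subseteq \{1,\dots,k\}$. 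Integrating, $\widetilde{H_0} = T' \times \prod_{i\in I} S_i$, where $T'$ is the subtorus with Lie algebra $\mathfrak{t}'$.

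Then the quotient $G'/\widetilde{H_0} = (T/T') \times \prod_{j\notin I} S_j$ is isogenous to $G/H = K$. Since $K$ is simple and nonabelian, the abelian factor $T/T'$ must be trivial and exactly one simple factor must survive, say $S_{j_0}$. This $S_{j_0}$ is then a simply-connected compact Lie group with the same Lie algebra as $K$, hence $S_{j_0} \cong \tilde{K}$ by uniqueness of the simply-connected form. Taking $H' := T \times \prod_{i\in I} S_i$ produces the desired decomposition $G' \cong H' \times \tilde{K}$ with $H'$ a finite cover of $H_0$.

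The principal obstacle I anticipate is the bookkeeping around the possible disconnectedness of $H$: one must keep careful track of four objects — $H$, its identity component $H_0$, the full preimage $p^{-1}(H_0)$ in $G'$, and its own identity component $\widetilde{H_0}$ — and verify that the finite quotient $G'/\widetilde{H_0}$ is still isogenous to the simple group $K$, even after we have replaced $H$ by $H_0$ and passed through a finite cover. Once this is in place, the standard Lie-algebra ideal lemma together with simple-connectedness of the $S_i$ factors upgrades the algebra-level splitting to a group-level direct product, and the proposition follows.
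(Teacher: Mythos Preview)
The paper does not supply its own proof of this proposition; it simply records it as a consequence of Sepanski's structure theorem (Theorem~5.22) and moves on. Your proposal is exactly the standard derivation one would give from that theorem, and it is correct. The one step you flag as the ``principal obstacle'' --- that $G'/\widetilde{H_0}$ is a finite cover of $K$ even after replacing $H$ by $H_0$ and passing to $G'$ --- is easily justified: the identity component of $p^{-1}(H)$ coincides with $\widetilde{H_0}$ (they share the Lie algebra $\mathfrak h$), so $p^{-1}(H)/\widetilde{H_0}$ is discrete and compact, hence finite, and $G'/\widetilde{H_0}\to G'/p^{-1}(H)\cong K$ is the required finite covering. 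Once that is in place, simplicity of $\mathfrak k$ forces $\mathfrak t'=\mathfrak t$ (so $T'=T$) and singles out a unique simply-connected factor $S_{j_0}\cong\tilde K$; then $H':=\widetilde{H_0}=T\times\prod_{i\in I}S_i$ is a finite cover of $H_0$ and $G'\cong H'\times\tilde K$, as you claim.
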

    Let $V$ be a complex irreducible representation of a compact Lie group $G$. Pick an arbitrary basis for $V$, so that we identify $V$ with $\C^n$, and identify $G$ with a subgroup of $GL(n,\C)$. The character of $V$ is a function $\mychi_V:G\to \C$, defined by
    \[ \mychi_V(g) = \Tr(g). \]
    The trace $\Tr(g)$ is independent of our choice of identification of $V$ with $\C^n$, so $\mychi_V$ is well-defined.
    \begin{prop}[Proposition 6.8 in Br\"ocker and tom Dieck \cite{brocker1985representations}]\label{Prop:RCQformula}
      Let $V$ be a complex irreducible representation of a compact Lie group $G$ with character \mbox{$\mychi_V:G\to \C$}. Then
        \[ \int_G \mychi_V(g^2) dg = \left\{
          \begin{array}{rcl}
            1 & \Leftrightarrow & \mbox{$V$ is of real type} \\
            0 & \Leftrightarrow & \mbox{$V$ is of complex type} \\
            -1 & \Leftrightarrow & \mbox{$V$ is of quaternionic type} \\
          \end{array}
        \right.
        \]
    \end{prop}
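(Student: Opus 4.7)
The plan is to connect $\int_G \mychi_V(g^2)\,dg$ to dimensions of invariant subspaces in the symmetric and exterior squares of $V$. The starting identity, obtained by computing the eigenvalues of $g$ on $V\otimes V = S^2 V \oplus \Lambda^2 V$ in terms of those on $V$, is
\[
\mychi_V(g^2) = \mychi_{S^2 V}(g) - \mychi_{\Lambda^2 V}(g).
\]
Integrating against normalized Haar measure and using the standard fact that $\int_G \mychi_W(g)\,dg = \dim W^G$ for any finite-dimensional representation $W$, this gives
\[
\int_G \mychi_V(g^2)\,dg = \dim (S^2 V)^G - \dim (\Lambda^2 V)^G.
\]

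Next I would analyze $(V\otimes V)^G$ as a whole. Using the $G$-invariant Hermitian inner product on $V$, one identifies $(V\otimes V)^G$ with the space of $G$-invariant bilinear forms on $V$, which is in turn isomorphic to $\operatorname{Hom}_G(V^*,V)$; by Schur's lemma this space has dimension $1$ if $V\cong V^*$ and $0$ otherwise. Complex type is by definition the case $V\not\cong V^*$, so both invariant summands above vanish and the integral equals $0$, which handles that case.

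In the remaining cases $V\cong V^*$, the one-dimensional space of invariant bilinear forms $B$ lies entirely in either $S^2V$ or $\Lambda^2 V$, so the integral is $+1$ or $-1$. The main technical step is matching ``symmetric'' to real type and ``antisymmetric'' to quaternionic type. I would translate $B$ into a $G$-equivariant conjugate-linear map $J\colon V\to V$ defined by $B(v,w)=\langle v,Jw\rangle$. Since $J^2$ is complex-linear and $G$-equivariant, Schur's lemma forces $J^2$ to be a scalar, and a short calculation using the Hermitian form forces the scalar to be real; after rescaling $J$ one has $J^2=\pm\mathrm{Id}$, which are exactly the defining real and quaternionic structures. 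A direct computation exploiting $\langle Jv,Jw\rangle=\overline{\langle v,w\rangle}$ then shows $J^2=+\mathrm{Id}$ makes $B$ symmetric and $J^2=-\mathrm{Id}$ makes $B$ antisymmetric. The main obstacle is the sign bookkeeping in this final computation, along with confirming that the black-box definitions of real and quaternionic type from Br\"ocker--tom Dieck are indeed characterized by the existence of such a $J$ with $J^2=\pm\mathrm{Id}$; once this is in place, all three cases of the formula follow.
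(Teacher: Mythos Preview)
The paper does not supply its own proof of this proposition; it is quoted verbatim as Proposition~6.8 from Br\"ocker--tom Dieck and used as a black box (the paper even says explicitly that the meanings of real, complex, and quaternionic type are being left unspecified). So there is nothing in the paper to compare your argument against.

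That said, your outline is the standard Frobenius--Schur indicator argument and is essentially the proof one finds in Br\"ocker--tom Dieck. A couple of small points where the write-up could be tightened: the identification of $(V\otimes V)^G$ with invariant bilinear forms on $V$ is cleanest if you note that $\dim W^G=\dim (W^*)^G$ for any representation $W$, so $\dim(S^2V)^G$ equals the dimension of invariant \emph{symmetric} bilinear forms on $V$ and similarly for $\Lambda^2V$, without routing through the Hermitian inner product at that stage. The Hermitian form is really only needed where you introduce $J$. Also, the reality of the scalar $J^2=\lambda$ follows from comparing $J\cdot J^2=\bar\lambda J$ with $J^2\cdot J=\lambda J$, and then a rescaling of $J$ normalizes $\lambda$ to $\pm 1$; you gesture at this but it is worth making explicit since you flag the sign bookkeeping as the main obstacle. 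With those clarifications your proposal is correct.
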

    \begin{cor}\label{Cor:TensorType}
      Let $V, W$ be complex irreducible representations of $G, H$ respectively. Let $V \otimes W$ be the tensor product of $V$ and $W$, an irreducible representation of $V\times W$ (see Proposition~\ref{Prop:TensorProduct}). Then
      \begin{itemize}
        \item $V \otimes W$ is of real type if $V$ and $W$ are both of real type or both of quaternionic type.
        \item $V \otimes W$ is of complex type if at least one of $V$ and $W$ are of complex type.
        \item $V \otimes W$ is of quaternionic type if one of $V$ and $W$ is of real type and the other is of quaternionic type.
      \end{itemize}
    \end{cor}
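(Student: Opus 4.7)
The plan is to apply Proposition~\ref{Prop:RCQformula} to $V \otimes W$ as a representation of the product group $G \times H$, and exploit the fact that the Haar measure on $G \times H$ factors as a product of Haar measures on $G$ and $H$.

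First I would recall that the character of the tensor product satisfies $\mychi_{V \otimes W}(g,h) = \mychi_V(g)\,\mychi_W(h)$ for all $(g,h)\in G\times H$, since the trace of a tensor product is the product of traces. In particular, $\mychi_{V\otimes W}\bigl((g,h)^2\bigr) = \mychi_V(g^2)\,\mychi_W(h^2)$. Then by Fubini's theorem applied to the product Haar measure,
\[
\int_{G\times H} \mychi_{V\otimes W}\bigl((g,h)^2\bigr)\,d(g,h) \;=\; \Bigl(\int_G \mychi_V(g^2)\,dg\Bigr)\Bigl(\int_H \mychi_W(h^2)\,dh\Bigr).
\]

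Next I would invoke Proposition~\ref{Prop:IrrepList}'s companion Proposition~\ref{Prop:RCQformula} twice: each factor on the right equals $+1$, $0$, or $-1$ according as the corresponding representation is of real, complex, or quaternionic type. Multiplying signs gives a straightforward case analysis. If either factor is $0$ (i.e.\ $V$ or $W$ is of complex type), the product is $0$, so $V\otimes W$ is of complex type. If both factors are $+1$ (both real) or both are $-1$ (both quaternionic), the product is $+1$, so $V\otimes W$ is of real type. If one factor is $+1$ and the other is $-1$ (one real, one quaternionic), the product is $-1$, so $V\otimes W$ is of quaternionic type. This exhausts all the cases listed in the corollary.

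There is no real obstacle; the only mild subtlety is checking that $V \otimes W$ is irreducible on $G \times H$ so that Proposition~\ref{Prop:RCQformula} applies, but this is exactly the content of Proposition~\ref{Prop:TensorProduct}. Everything else reduces to the multiplicativity of characters under tensor products and Fubini's theorem.
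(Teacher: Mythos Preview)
Your proof is correct and follows the same approach as the paper: observe that $\mychi_{V\otimes W}(g,h)=\mychi_V(g)\mychi_W(h)$, factor the integral over $G\times H$ as a product, and read off the type from the product of signs via Proposition~\ref{Prop:RCQformula}. Your write-up is in fact slightly more detailed than the paper's, which omits the explicit case analysis and the remark about irreducibility.
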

    \begin{proof}
      Observe that $\mychi_{V\otimes W}(g,h) = \mychi_V(g)\mychi_W(h)$. Then
      \begin{align*}
        \int_{G\times H} \mychi_{V\otimes W}(g^2,h^2)dgdh &= \int_{G\times H} \mychi_V(g^2)\mychi_W(h^2) dgdh \\
          &= \left( \int_G \mychi_V(g^2) dg \right) \left( \int_H \mychi_W(h^2) dh \right).
      \end{align*}
      The result follows.
    \end{proof}
    \keylemma
    \begin{proof}
      In each of the three cases, the strategy is the same. We want to show that $G$ is a simply connected Lie group acting in the standard way. We use Proposition~\ref{Prop:ProdGroup} to find a covering group $G'$ of $G$ which we can write as a product. We pull back the action of $G$ to an action of $G'$, and use Proposition~\ref{Prop:TensorProduct} to decompose this action as a tensor product of irreducible representations. Then we use the list of low-dimensional representations in Proposition~\ref{Prop:IrrepList}, together with Corollary~\ref{Cor:TensorType} to show that the action of $G'$ is standard, and in particular is nontrivial on its center, so that $G=G'$.
      \begin{enumerate}
        \item There exist covering groups $G'$, $H'$ of $G$, $H_0$ for which $G'=H' \times SU(n+1)$. The action of $G$ pulls back to an irreducible action of $G'$. By Proposition~\ref{Prop:RealIrrep}, there is either a complex irreducible representation of dimension $2n+2$ and of real type which restricts to the real action of $G'$, or there's a complex irreducible representation of dimension $n+1$ of complex or quaternionic type which equals the real action of $G'$ after forgetting the complex structure.
        
        Either way, the complex action of $G'$ is the tensor product of irreducible representations of $H'$ and $SU(n+1)$, by Proposition~\ref{Prop:TensorProduct}, and the latter is nontrivial (or else $G/H$ could not act transitively on the base of $\F$). Suppose first that $n \geq 2$; we'll return to $n=1$ momentarily. When $n \geq 2$, we have $2n+2 < n^2 + n$, and so we may find every complex irreducible representation of $SU(n+1)$ of dimension at most $2n+2$ on the list found in Proposition~\ref{Prop:IrrepList}, case 1.
        
        The only irreducible representations of $SU(n+1)$ not of complex type are one of real type in dimension 6 when $n=3$, and one of quaternionic type of dimension 20 when $n=5$. These dimensions do not divide either $n+1$ or $2n+2$ in either case. By Corollary~\ref{Cor:TensorType}, the action of $G'$ must be the result of forgetting the complex structure on the $(n+1)$-dimensional tensor product of a complex irreducible representation of $H'$ with a complex irreducible representation of $SU(n+1)$ of complex type. It follows that we must be looking at the tensor product of a \mbox{1-dimensional} representation of $H$ with the defining representation of $SU(n+1)$, because every other representation of $SU(n+1)$ has dimension larger than $n+1$.
        
        If $n=1$, then the action of $G'$ on $\R^4$ is either the restriction of an irreducible complex action on $\C^4$ of real type or it's an irreducible action on $\C^2$, forgetting the complex structure, and of complex or quaternionic type. The group $SU(2)$ has precisely one complex irreducible representation of dimension 2 (the defining representation), and one of dimension 4, and they're both of quaternionic type. But if we take the tensor product of the 4-dimensional representation of $SU(2)$ with a 1-dimensional representation of $H'$, then the result cannot be of real type, applying Corollary~\ref{Cor:TensorType} and observing that 1-dimensional representations of compact Lie groups are never of quaternionic type. (For semi-simple compact Lie groups, their only 1-dimensional representation is the trivial one, and thus is of real type, and for tori, the 1-dimensional representations are always of complex type.)
        
        Thus the action of $G'$ always has a subgroup the defining representation of $SU(n+1)$. This action is nontrivial on its center, so when the action of $SU(n+1)$ descends to the original action of $G$, we keep the full $SU(n+1)$ group. Thus $G$ contains $SU(n+1)$ as a subgroup acting in the standard way.
        
        \item There exist covering groups $G'$, $H'$ of $G$, $H_0$ for which $G'=H'\times Sp(n+1)$. The action of $G$ pulls back to an action of $G'$. By Proposition~\ref{Prop:RealIrrep}, there's either a complex irreducible representation of $G'$ of dimension $4n+4$ and of real type, restricting to the real action of $G'$, or a complex irreducible representation of $G'$ of dimension $2n+2$ and of complex or quaternionic type which equals the real action of $G'$.
        
        An irreducible representation of $G'$ is the tensor product of irreducible representations of $H'$ and $Sp(n+1)$, by Propositon~\ref{Prop:TensorProduct}. The irreducible representation of $Sp(n+1)$ must be nontrivial, or else $G/H$ could not act transitively on the base of $\F$. It must also have dimension at most $4n+4$, which is less than $2n^2+5n+3$ for $n\geq 1$. Thus the irreducible representation of $Sp(n+1)$ appears in the list in Proposition~\ref{Prop:IrrepList}. The dimension of the representation which is not the defining one is $2(n+1)^2 - (n+1)-1$, or $2n^2+3n$. This quantity is greater than $4n+4$ for $n \geq 2$, and equals $5$ for $n=1$, so it certainly does not divide either $2n+2$ or $4n+4$. So it cannot appear in the tensor product representation of $G'$. Similarly, the other representation of $Sp(3)$ of dimension $14$ cannot appear in the tensor product representation of $G'$, because $14$ does not divide $6$ or $12$.
        
        Thus the irreducible representation of $G'$ of dimension $2n+2$ must contain the defining representation of $Sp(n+1)$ as a subgroup. This representation is nontrivial on its center, so it projects down to the defining representation of $Sp(n+1)$ as a subgroup of $G$.
        
        \item There exist covering groups $G'$, $H'$ of $G$, $H_0$ for which $G'=H'\times Spin(9)$. The action of $G$ pulls back to an action of $G'$. By Proposition~\ref{Prop:RealIrrep}, there's either a complex irreducible representation of $G'$ of dimension 16 and of real type, restricting to the real action of $G'$, or there's a complex irreducible representation of $G'$ of dimension 8 which equals the real action of $G'$. The action of $G'$ is the tensor product of irreducible representations of $H'$ and of $Spin(9)$, by Proposition~\ref{Prop:TensorProduct}, and the representation of $Spin(9)$ is nontrivial or else $G/H$ could not act transitively on the base of $\F$.
        
        There are only two nontrivial complex irreducible representations of $Spin(9)$ of dimension less than 36; the vector representation of dimension 9 and the spin representation of dimension 16. Thus the action of $G'$ is the tensor product of a 1-dimensional representation of $H'$ with the 16-dimensional spin representation of $Spin(9)$. This action is nontrivial on the center of $Spin(9)$, and so $G$ contains a copy of $Spin(9)$ as well, acting as the spin representation on $\R^{16}$.
      \end{enumerate}
      This concludes the proof of the Key Lemma.
    \end{proof}
    
  \section{Locally fiberwise homogeneous fibrations in the 3-sphere}\label{Sec:Local}
    \subsection{Background}
      A stronger version of the Main Theorem would be the following local version: if we fiber a connected open subset of a round sphere by smooth subspheres, so that it's {\em locally fiberwise homogeneous} | so that any two fibers have open sets around them with an isometry taking one open set to the other, preserving fibers and taking the first given fiber to the second | then that fibration is a portion of a Hopf fibration.
    
      The proof that a global fiberwise homogeneous fibration of a round sphere by subspheres is a Hopf fibration relies heavily on the global structure of the fibration. To even get off the ground in proving the statement, we need to know exactly which manifold the base space of the fibration is diffeomorphic to. So it does not appear that we can prove our local theorem with the same methods we used to prove the global theorem.
      
      In this chapter we prove the following theorem.
      \begin{thm}\label{Thm:LocallyFWH}
        Let $\F$ be a locally fiberwise homogeneous fibration of a connected open set in the 3-sphere by great circles. Then $\F$ is a portion of a Hopf fibration.
      \end{thm}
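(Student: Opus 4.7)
The plan is to use the classical identification $\Gr\cong S^2_+\times S^2_-$ coming from the self-dual/anti-self-dual decomposition of $\wedge^2\R^4$. Sending each fiber of $\F$ to the oriented $2$-plane it spans gives a $2$-dimensional submanifold $\Sigma$ of $S^2\times S^2$. A short computation with principal angles shows that two distinct oriented $2$-planes with coordinates $(u_+,u_-)$ and $(u_+',u_-')$ in $S^2\times S^2$ intersect only at the origin of $\R^4$ iff $d_{S^2}(u_+,u_+')\neq d_{S^2}(u_-,u_-')$; this is the Gluck-Warner disjointness criterion. The Hopf fibrations correspond precisely to $\Sigma$ lying in a single slice $\{u_+^0\}\times S^2$ or $S^2\times\{u_-^0\}$, so the goal is to show $\Sigma$ must be of one of these forms.

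The first main step is to produce a Lie algebra of symmetries from the local fiberwise homogeneity hypothesis. Because $S^3$ is simply connected and real analytic, any isometry between connected open subsets of $S^3$ extends uniquely to an element of $O(4)$; the hypothesis thus provides, for every pair of fibers $F,F'$ of $\F$, an element of $O(4)$ mapping a neighborhood of $F$ onto a neighborhood of $F'$ and carrying $\F$ to $\F$ there. The action of $SO(4)$ on $\widetilde{G}_2\R^4=S^2\times S^2$ factors through the double cover $SO(4)\to SO(3)\times SO(3)$. We collect the Killing fields on $S^3$ whose flows preserve $\F$ locally; these form a Lie subalgebra $\mathfrak{g}\subset\mathfrak{so}(3)\oplus\mathfrak{so}(3)$. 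Differentiating the family of local isometries sending a chosen fiber $F_0$ to nearby fibers produces enough elements of $\mathfrak{g}$ to span the two-dimensional tangent plane of $\Sigma$ at $[F_0]$, so the connected subgroup $K\subset SO(3)\times SO(3)$ generated by $\mathfrak{g}$ has an orbit coinciding with $\Sigma$ near $[F_0]$, and $\dim K\geq 2$.

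The second main step is to classify connected subgroups $K\subset SO(3)\times SO(3)$ with two-dimensional orbits on $S^2\times S^2$ and to rule out every case but the Hopf one using Gluck-Warner. Since $\mathfrak{so}(3)$ has no $2$-dimensional subalgebra, a direct analysis leaves only the following possibilities for the Lie algebra $\mathfrak{k}$ of $K$: a product $\mathfrak{u}(1)\oplus\mathfrak{u}(1)$; the diagonal embedding of $\mathfrak{so}(3)$; a factor $\mathfrak{so}(3)\oplus 0$ or $0\oplus\mathfrak{so}(3)$; or the latter enlarged by a $\mathfrak{u}(1)$ in the other factor whose orbit on the relevant sphere is a single point. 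The first case gives a torus orbit $C_+\times C_-$ whose two projected distance functions coincide on a nontrivial set unless one circle degenerates, violating Gluck-Warner. The diagonal case gives an orbit $\{(gv,\pm gv):g\in SO(3)\}$ on which the two distance functions are identically equal, again violating Gluck-Warner. The remaining possibilities all produce orbits $\{u_+^0\}\times S^2$ or $S^2\times\{u_-^0\}$, namely the Hopf slices, and we conclude that $\F$ is part of a Hopf fibration.

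The hardest technical step will be the second paragraph: verifying that the local symmetries really assemble into a Lie subalgebra of global Killing fields whose infinitesimal action is transitive on $\Sigma$. This combines the analytic extension of local isometries of $S^3$ with a standard pseudogroup argument; once it is in place, the rest is a short subalgebra classification and a comparison of two explicit distance functions on $S^2$.
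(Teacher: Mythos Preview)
Your route is genuinely different from the paper's, and the subalgebra classification together with the Gluck--Warner elimination in your last paragraph is correct and clean. The problem lies exactly where you anticipated: the passage from local fiberwise homogeneity to a transitive action of a connected Lie subgroup $K\subset SO(3)\times SO(3)$ on $\Sigma$ is a genuine gap, not a routine pseudogroup argument.

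The difficulty is this. Local homogeneity gives, for each pair $p,q\in\Sigma$, a single element $g_{p,q}\in SO(3)\times SO(3)$ carrying the germ of $\Sigma$ at $p$ to the germ at $q$. Nothing forces these elements to vary smoothly in $(p,q)$, so ``differentiating the family'' has no a~priori meaning. The natural candidate $\mathfrak{g}=\{X\in\mathfrak{so}(3)\oplus\mathfrak{so}(3): X\text{ is tangent to }\Sigma\text{ along }\Sigma\}$ is indeed a Lie subalgebra, but your hypotheses do not force it to be nonzero, let alone to surject onto $T_p\Sigma$. Similarly, the set $S=\{g: g(p_0)\in\Sigma\text{ and }g\text{ carries the germ at }p_0\text{ to the germ at }g(p_0)\}$ is a union of cosets of a closed isotropy subgroup and does map onto $\Sigma$, but it is \emph{not} closed under multiplication: for $g,h\in S$ one needs $g$ to preserve the germ of $\Sigma$ at $h(p_0)$, which is not given. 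Under a real-analyticity assumption on $\F$ these issues disappear, since germ-preservation then propagates along all of $\Sigma$ and $S$ becomes a genuine subgroup; but the paper works only under a $C^2$ hypothesis, and your outline does not supply the extra regularity.

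The paper sidesteps this entirely. Rather than assemble a group, it extracts from local homogeneity only the pointwise consequence that the singular values of $df_x$ are independent of $x\in V$. If they are distinct, the associated orthonormal eigen-frame $(X,Y)$ on $V$ is preserved by every local isometry, and a direct curvature computation (Lemma~\ref{Lem:NonposCurv}) shows that any surface carrying such an invariant frame has sectional curvature $\le 0$, contradicting $V\subset S^2$. If the singular values both equal some $r$, then $f$ scales all distances by $r$; the distance-decreasing condition forces $r<1$, and comparing curvatures of domain and image forces $r=0$, so $f$ is constant. This argument uses only first-order information from the homogeneity and so avoids the local-to-global obstacle your approach runs into.
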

      We prove our desired local theorem only for the lowest dimension, and only under the restriction that our fibers are great circles. The reason we can prove our theorem under these circumstances is that we can make use of a moduli space for the space of great-circle fibrations of the 3-sphere.
      
    \subsection{Great circle fibrations of the 3-sphere}
      The following description of great circle fibrations of the 3-sphere is due to Herman Gluck and Frank Warner \cite{gluck1983great}.
    
      An oriented great circle in the 3-sphere can be identified with an oriented 2-plane through the origin in $\R^4$. Thus a great circle in the 3-sphere is identified with a point in $\Gr$, the Grassmann manifold of oriented 2-planes in $\R^4$. A fibration of the 3-sphere by great circles is identified in this way with a 2-dimensional submanifold of $\Gr$. We also may identify $\Gr$ with the manifold $S^2\times S^2$.
    
      \begin{thm}[From \cite{gluck1983great}]\label{Thm:GW}
        There is a one-to-one correspondence between great circle fibrations of the 3-sphere and submanifolds of $\Gr = S^2 \times S^2$ which are graphs of distance decreasing functions from one $S^2$ factor to the other.
      
        Similarly, there is a one-to-one correspondence between great circle fibrations of open sets in the 3-sphere, and submanifolds of $\Gr = S^2\times S^2$ which are graphs of distance decreasing functions from an open set in one $S^2$ factor to the other factor.
      
        The one-to-one correspondence is simply to identify a great circle with a point in $\Gr$.
      \end{thm}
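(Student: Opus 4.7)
The plan is to build the correspondence by first identifying $\Gr$ with $S^2\times S^2$ in a geometrically meaningful way, then establishing a key lemma translating the common-direction relation between two 2-planes into an equality of distances in the two $S^2$ factors, and finally using this dictionary to pass between fibrations and graphs in both directions. For the identification, I would use the Hodge decomposition $\Lambda^2\R^4=\Lambda^2_+\oplus\Lambda^2_-$ of bivectors into self-dual and anti-self-dual parts, each a copy of $\R^3$. An oriented 2-plane $P$ has a unit simple bivector $\omega_P$, and $\omega_P\wedge\omega_P=0$ together with $\|\omega_P\|^2=1$ forces $\|(\omega_P)_\pm\|=1/\sqrt{2}$, so the map $P\mapsto(\sqrt{2}\,\omega_+,\sqrt{2}\,\omega_-)$ is a diffeomorphism onto $S^2\times S^2$ pulling the round product metric back to a constant multiple of the Grassmann metric. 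The key lemma is then: if $P$ and $Q$ have principal angles $0\le\theta_1\le\theta_2\le\pi/2$, placing them in normal form $P=\operatorname{span}(e_1,e_2)$ and $Q=\operatorname{span}(\cos\theta_1\,e_1+\sin\theta_1\,e_3,\ \cos\theta_2\,e_2+\sin\theta_2\,e_4)$ and expanding in the standard bases of $\Lambda^2_\pm$ yields factor distances $\theta_2-\theta_1$ and $\theta_1+\theta_2$; consequently $P$ and $Q$ share a common nonzero vector iff $\theta_1=0$ iff the two factor distances coincide.

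For the forward direction, let $\F$ be a great-circle fibration of a connected open $U\subseteq S^3$ and let $M\subset S^2\times S^2$ be the corresponding smooth 2-manifold. Pairwise disjointness of the great circles becomes $d_+(p^+,q^+)\neq d_-(p^-,q^-)$ for all distinct $p,q\in M$. Linearizing at $p\in M$, one examines the smooth function $h(q)=d_+(p^+,q^+)^2-d_-(p^-,q^-)^2$ on $M$: it satisfies $h(p)=0$, $dh_p=0$, and its Hessian on $T_pM$ equals $2(\|v^+\|^2-\|v^-\|^2)$. Because $h$ does not change sign in a punctured neighborhood of $p$, this quadratic form is semidefinite on $T_pM$, and in particular one of the projections $T_pM\to T_{p^\pm}S^2$ is an isomorphism, making $M$ locally a graph over that $S^2$ factor. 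The sign of the Hessian is locally constant in $p$, so on each connected component $M$ is a global graph $f:V\to S^2$ over an open subset of one $S^2$ factor, and the pairwise distance inequality translates to the strict distance-decreasing condition $d(f(x),f(y))<d(x,y)$ for $x\neq y$.

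For the converse, given the graph of a strictly distance-decreasing $f:V\to S^2$, the lemma gives pairwise disjointness of the associated great circles. It remains to show that each point of the resulting union lies on exactly one circle of the family, and in the global case $V=S^2$ that the union is all of $S^3$. Fix $x\in S^3$; the set $S_x$ of oriented 2-planes through $x$ is a 2-sphere inside $S^2\times S^2$, and choosing coordinates with $x=e_1$ and computing directly shows $S_x$ is the graph of an orientation-reversing isometry $\phi_x:S^2\to S^2$. A great circle of the family passes through $x$ iff there exists $p\in V$ with $f(p)=\phi_x(p)$; uniqueness is immediate, because if $p_1\neq p_2$ both satisfied this then $d(f(p_1),f(p_2))=d(\phi_x(p_1),\phi_x(p_2))=d(p_1,p_2)$, contradicting strict decrease. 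Existence in the global case follows from a degree argument: $g:=\phi_x^{-1}\circ f$ is strictly distance-decreasing, so its pointwise Jacobian has operator norm less than $1$ everywhere, its image has area strictly less than $4\pi$, and it is therefore not surjective, hence nullhomotopic; the Lefschetz number on $S^2$ is $L(g)=1+\deg g=1\neq 0$, producing a fixed point. For proper open $V$ no existence claim is required, and openness of the union of the great circles in $S^3$ follows from a submersion argument applied to the map $M\times S^1\to S^3$ that traces out the circles.

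I expect the main obstacles to be, first, showing in the forward direction that the local graph structure on $M$ patches into a single-valued function over an open subset of one $S^2$ factor, which requires the global pairwise-disjointness information and not merely the infinitesimal Hessian data; and second, executing the Lefschetz/degree existence argument in the converse, where the strict (rather than merely non-strict) distance-decreasing assumption is used essentially to force $g$ to be nullhomotopic.
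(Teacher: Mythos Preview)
The paper does not prove this statement; it is quoted from Gluck--Warner as a black box, so there is no proof here to compare your proposal against.

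That said, your outline is broadly correct and follows the spirit of the original Gluck--Warner argument: the identification of $\Gr$ with $S^2\times S^2$ via the splitting $\Lambda^2\R^4=\Lambda^2_+\oplus\Lambda^2_-$, the principal-angle computation that two oriented $2$-planes share a line precisely when their images lie at equal distance in the two $S^2$ factors, and a fixed-point argument for global coverage in the converse. The Hessian step is legitimate: on a connected punctured $2$-dimensional neighborhood $h$ has a fixed sign, so $p$ is a local extremum and the Hessian of $h$ is semidefinite; semidefiniteness of $v\mapsto \|v^+\|^2-\|v^-\|^2$ on a $2$-plane then forces one projection $T_pM\to T_{p^\pm}S^2$ to be injective (if $v^+=0$ then $\|v^-\|^2\le 0$, so $v=0$). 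Your stated worry about globalizing the graph structure is the real subtlety: at points where the Hessian on $T_pM$ vanishes identically, the ``sign'' is not determined by second-order data, so the claim that the sign is locally constant in $p$ needs a supplementary argument using the global nonvanishing of $h$ on $M\setminus\{p\}$, not just the infinitesimal information. Once that is handled, the rest of your sketch goes through.
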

      
      This moduli space for great circle fibrations of the 3-sphere allows us to more easily answer questions we have about these fibrations, by translating them into questions about distance-decreasing functions on the 2-sphere. In \cite{gluck1983great}, Gluck and Warner use this method to show (for example) that any great circle fibration of the 3-sphere contains two orthogonal fibers, and that the space of such fibrations deformation retracts to the subspace of Hopf fibrations.
      
      We need to translate the ideas of ``fiberwise homogeneous'' and ``locally fiberwise homogeneous'' from the setting of fibrations to the setting of distance-decreasing maps on the 2-sphere. In the former setting, we have a subgroup of $SO(4)$ acting on a fibration in the 3-sphere; therefore, we need to know how the $SO(4)$ action on the 3-sphere translates to the setting of $S^2\times S^2$, where the distance-decreasing functions live.
      
      The group $SO(4)$ is double covered by $SU(2)\times SU(2)$, and in turn double covers $SO(3)\times SO(3)$. If we follow the identification of $\Gr$ with $S^2 \times S^2$ (see \cite{gluck1983great}) for details), we find that the action of $SO(4)$ on $S^3$ induces an action of $SO(3)\times SO(3)$ on $S^2\times S^2$, where the first (respectively second) $SO(3)$ factor acts by isometries on the first (resp.~second) $S^2$ factor.

      We call a subset $S$ of $S^2\times S^2$ homogeneous if some subgroup of the isometry group of $S^2\times S^2$ preserves $S$ and acts transitively on it. We say that $S$ is locally homogeneous if for any $s_1, s_2\in S$ there is an isometry of $S^2\times S^2$ taking a neighborhood of $s_1$ isometrically to a neighborhood of $s_2$.
      \begin{prop}
        Let $\F$ be a fibration of (an open set in) $S^3$ by great circles. Let $S$ be the corresponding graph in $S^2\times S^2$ given by Theorem~\ref{Thm:GW}; i.e. identify each great circle with a point in $\Gr$. Then $\F$ is (locally) fiberwise homogeneous if and only if $S$ is (locally) homogeneous in $S^2\times S^2$.
      \end{prop}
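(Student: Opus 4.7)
The plan is to use the Gluck-Warner correspondence as a dictionary between the two settings. The central fact is that the action of $SO(4)$ on $\Gr$ factors through the quotient by the center $\{\pm I\}\subset SO(4)$, giving the double cover $SO(4)\to SO(3)\times SO(3)$, and the induced $SO(3)\times SO(3)$ action on $\Gr\cong S^2\times S^2$ is exactly the coordinatewise one used above. Thus an isometry $\phi\in SO(4)$ and its image $\bar\phi\in SO(3)\times SO(3)$ agree under the identification: a great circle $C\subset S^3$ corresponding to $s\in\Gr$ is sent by $\phi$ to the great circle corresponding to $\bar\phi(s)$.

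The forward direction is then immediate. An isometry of $S^3$ preserving $\F$ (either globally or on neighborhoods of two specified fibers $F_1,F_2$) and sending $F_1$ to $F_2$ induces an isometry of $S^2\times S^2$ preserving $S$ (globally or locally) and sending $s_1$ to $s_2$.

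The reverse direction carries a genuine subtlety: $\Isom(S^2\times S^2)=(O(3)\times O(3))\rtimes\Z_2$ is strictly larger than the image $(SO(3)\times SO(3))\rtimes\Z_2$ of $O(4)$, so an arbitrary isometry of $S^2\times S^2$ preserving $S$ need not lift to $O(4)$. For the global case, I would pass to the identity component: given a closed subgroup $G\subseteq\Isom(S^2\times S^2)$ acting transitively on $S$ and preserving it, its identity component $G^0$ lies in $\Isom(S^2\times S^2)^0=SO(3)\times SO(3)$. Since $S$ is connected (it is the graph of a continuous function on a connected open set) and $G/G^0$ is discrete, $G^0$ is already transitive on $S$. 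I then lift $G^0$ through the double cover $SO(4)\to SO(3)\times SO(3)$ to a connected Lie subgroup of $SO(4)$, which acts on $S^3$, preserves $\F$, and is transitive on fibers.

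For the local case the same idea is carried out infinitesimally, since no transitive group action is directly available. The Lie algebras $\mathfrak{so}(3)\oplus\mathfrak{so}(3)$ and $\mathfrak{so}(4)$ coincide, so Killing fields on $S^2\times S^2$ correspond bijectively to Killing fields on $S^3$ under the double cover, and fields on $S^2\times S^2$ tangent to $S$ lift to fields on $S^3$ whose flows preserve $\F$. Local homogeneity of $S$ should produce at each $s\in S$ enough such tangent Killing fields to span $T_sS$; their lifts generate a locally transitive infinitesimal action on the fibers of $\F$, which integrates to the local isometries of $S^3$ required. The hardest step is this last one---extracting infinitesimal transitivity from the ``global isometries sending neighborhoods to neighborhoods'' form of local homogeneity---and I expect it to require either the analyticity of $S^2\times S^2$ or a direct argument using the distance-decreasing structure of the graph.
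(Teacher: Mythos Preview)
The paper's proof of this proposition is a single sentence: it declares the result ``immediate from the definitions\ldots and the identification of great circles with points in $\Gr=S^2\times S^2$.'' Your proposal is therefore far more detailed than the paper's own argument, not less.

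You have correctly identified a subtlety that the paper's one-liner sweeps under the rug: the paper defines (local) homogeneity of $S$ using the \emph{full} isometry group of $S^2\times S^2$, which strictly contains the image of $O(4)$, so the reverse implication is not literally a tautology. Your forward direction matches the paper's intent and is genuinely immediate; your identity-component argument for the global reverse direction is correct and clean.

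The local reverse direction is where your plan remains incomplete, and you rightly flag it. The Killing-field approach is heavier than needed and, as you yourself note, does not obviously extract infinitesimal transitivity from the pointwise ``one global isometry per pair of points'' form of local homogeneity. A more direct line would first observe that an isometry carrying a neighborhood in $S$ to another cannot swap the two $S^2$ factors (a graph of a strictly distance-decreasing map is never a graph in the other direction), so it already lies in $O(3)\times O(3)$; one then tries to use the connectedness of $S$ to reduce to the identity component $SO(3)\times SO(3)$. In any case, the paper only ever invokes the \emph{forward} direction of this proposition in the proof of Theorem~\ref{Thm:LocallyFWH}, so the reverse implication, while stated, is not load-bearing.
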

      \begin{proof}
        The proof is immediate from the definitions of fiberwise homogeneous, locally fiberwise homogeneous, homogeneous, locally homogeneous, and the identification of great circles with points in \mbox{$\Gr=S^2\times S^2$}.
      \end{proof}
      
    \subsection{Locally fiberwise homogeneous fibrations are subsets of Hopf fibrations}
      We now prove the main theorem of this section.
      \begin{thm:local}
       Let $\F$ be a locally fiberwise homogeneous fibration of a connected open set in the 3-sphere by great circles. Then $\F$ is a portion of a Hopf fibration.
      \end{thm:local}
      
      The distance-decreasing functions associated to the Hopf fibrations are the constant functions. We will show that that the distance-decreasing map $f$ of $S^2$ associated to the fibration is the constant map. We will assume that this map $f$ is at least $C^2$.
      
      \begin{proof}
        Let $\F$ be a locally fiberwise homogeneous fibration of an open set $W\subseteq S^3$ by great circles. Let $f:V\to S^2$ be the corresponding distance-decreasing function on an open set $V\subseteq S^2$ whose graph in $S^2\times S^2$ consists of the great circles making up $\F$. We assume that $f$ is $C^2$. As a consequence, the differential $df_x$ of $f$ is defined at every point $x\in V$. The homogeneity of the graph of $f$ is equivalent to the following: for every $x$ and $x'$ in $V$, there are isometries $g_1$ and $g_2$ of $S^2$ such that $g_1$ takes an open neighborhood $U$ of $x$ to an open neighborhood $U'$ of $x'$ (and takes $x$ to $x'$), and such that
        \[ f \circ g_1 = g_2 \circ f \]
        holds on $U$.
    
        The homogeneity of $f$ implies that the differential $df$ is ``similar'' independent of $x\in V$. We would like to say something along the lines of: the homogeneity of the graph of $f$ implies that the eigenvalues of $df_x$ are independent of $x$. After all, $df_x$ is a linear map between 2-dimensional vector spaces. But $df_x$ is not a map from a vector space to itself. So instead, we consider the image under $df_x$ of the unit circle in the tangent space to $x$. The image of a circle centered at the origin under a general linear map will be some ellipse, possibly degenerate (possibly a circle or line segment or point), and when the ellipse has distinct axes, the preimages of the axes will be orthogonal.
    
        The homogeneity of $f$ implies that the image under $df_x$ of the unit circle in the tangent space to $x$ will be independent of $x$. That is, the magnitude of the major and minor axes of the ellipse will be constant (and possibly identical and/or zero) for all $x\in V$.
      
        The image of a unit circle under $df$ is either a circle (possibly with radius 0) or a proper ellipse with distinct major and minor axes. We show that the second possibility cannot happen; the axes must be identical.
      
        Suppose that the ellipse has two different axes. We will derive a contradiction. In that case, the local homogeneity of $f$ implies not only that the magnitudes of the axes of the ellipses are constant, but also that the local isometries preserve the preimages of the major and minor axes. In other words, let $X,Y$ be unit vector fields along $V$ which map via $df$ to the major and minor axes of the ellipses in the tangent spaces of $f(V)$. Then the local isometries which commute with $f$ also preserve $X$ and $Y$. Now the following lemma applies to show that $V$ must have nonpositive curvature.
      
        \begin{lem}\label{Lem:NonposCurv}
          Let $F$ be a locally homogeneous surface, and suppose $(X,Y)$ is an orthonormal frame along $F$ which is preserved by the locally homogeneous structure (i.e.~the isometries which take any point of $F$ to any other also preserve $X$ and $Y$). Then the sectional curvature of $F$ is nonpositive.
        \end{lem}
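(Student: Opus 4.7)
The plan is to compute the Gaussian curvature directly from the connection coefficients of the frame $(X,Y)$ and observe that it comes out as minus a sum of two squares. The entire argument rests on the observation that local homogeneity preserving the frame forces these connection coefficients to be constants on $F$.

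First I would introduce the two scalar functions
\[ \alpha = \langle \n_X X, Y\rangle, \qquad \beta = \langle \n_Y Y, X\rangle. \]
Because $X$ and $Y$ are orthonormal, the Levi-Civita connection is completely determined by $\alpha$ and $\beta$:
\begin{align*}
  \n_X X &= \alpha Y, & \n_X Y &= -\alpha X, \\
  \n_Y X &= -\beta Y, & \n_Y Y &= \beta X.
\end{align*}
Next I would argue that $\alpha$ and $\beta$ are constants. The Levi-Civita connection is an intrinsic object, so every local isometry supplied by the locally homogeneous structure carries $\n$ to itself; since by hypothesis these isometries also carry $X$ to $X$ and $Y$ to $Y$, they must preserve $\alpha$ and $\beta$. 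Transitivity of the locally homogeneous structure on $F$ then pins $\alpha$ and $\beta$ down as constants.

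With that in hand, the curvature computation is a short bracket calculation. From the formulas above, $[X,Y] = \n_X Y - \n_Y X = -\alpha X + \beta Y$, and the constancy of $\alpha, \beta$ kills the derivative terms, giving $\n_X \n_Y Y = \alpha\beta Y$ and $\n_Y \n_X Y = \alpha\beta Y$, while $\n_{[X,Y]}Y = (\alpha^2 + \beta^2) X$. Therefore
\[ R(X,Y)Y = \n_X \n_Y Y - \n_Y \n_X Y - \n_{[X,Y]} Y = -(\alpha^2 + \beta^2)\,X, \]
and the sectional curvature of $F$ is $K = \langle R(X,Y)Y, X\rangle = -(\alpha^2 + \beta^2) \le 0$.

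The only nontrivial step is the constancy of $\alpha$ and $\beta$; once that is established, the rest is a routine structure-equation calculation. It is worth noting that the argument in fact gives more than the statement asks for: the curvature is a nonpositive constant, and it vanishes precisely when the frame $(X,Y)$ is parallel.
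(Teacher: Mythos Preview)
Your argument is correct and is essentially the same as the paper's: both observe that local homogeneity preserving the frame forces the connection coefficients to be constant, and then compute the curvature directly to obtain $K=-(\alpha^2+\beta^2)\le 0$. The only cosmetic difference is that the paper parametrizes by the bracket coefficients $[X,Y]=aX+bY$ and then recovers $\langle\n_X X,Y\rangle=-a$, $\langle\n_Y X,Y\rangle=-b$, whereas you start from the connection coefficients and recover the bracket; the two are related by $a=-\alpha$, $b=\beta$, and the final formula is identical.
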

      
        We save the proof for the end of this section. We have a contradiction, because Lemma~\ref{Lem:NonposCurv} tells us $V$ has nonpositive curvature, yet $V$ is an open subset of the round sphere, which has positive curvature. Thus the image under $df$ of the unit circle in a tangent space to $V$ must be a circle of radius $r \geq 0$, with $r$ independent of the point in $V$. Note first that $r \leq 1$, because $f$ is distance-decreasing. In fact we must have that $r$ is strictly less than 1. Even though distance-decreasing functions might preserve distance infinitesimally, if $r=1$ then $f$ preserves all geodesic distances and hence is not distance-decreasing. Now we show that we must have $r=0$. Suppose $r>0$. Then $f$ multiplies all geodesic distances by $r$, and hence multiplies curvature by $1/r^2$, which is greater than 1. But the image of $V$ lies in a 2-sphere of the same radius as the domain, so this is impossible. Therefore we must have $r=0$, and hence $df\equiv 0$. It follows that $f$ is a constant map, because $V$ is connected, and therefore our locally fiberwise homogeneous fibration is a portion of a Hopf fibration.
      \end{proof}
      
      \begin{proof}[Proof of Lemma~\ref{Lem:NonposCurv}]
        We denote sectional curvature of a plane spanned by an orthonormal basis by $K(X,Y) = \langle R(X,Y)X, Y \rangle$, where $R$ is the Riemannian curvature tensor, and we denote by $\n$ the Riemannian connection on $F$. Note that any real-valued function on $F$ depending only on $X$ and $Y$ is constant, because of the local homogeneous structure on $F$ preserving $X$ and $Y$. We have
        \begin{align*}
        K(X,Y) &= \langle R(X,Y)X, Y \rangle \\
            &= \langle \n_Y \n_X X - \n_X \n_Y X + \n_{[X,Y]} X, Y \rangle \\
            &= \langle \n_Y \n_X X, Y \rangle
               + \langle \n_X \n_Y X, Y \rangle
               + \langle \n_{[X,Y]} X, Y \rangle \\
            &= Y \langle \n_X X, Y \rangle - \langle \n_X X, \n_Y Y \rangle \\
              &\quad + X \langle \n_Y X, Y \rangle - \langle \n_Y X, \n_X Y \rangle \\
              &\quad + \langle \n_{[X,Y]} X, Y \rangle \\
            &= 0 + 0 + \langle \n_{[X,Y]} X, Y \rangle .
        \end{align*}
        In the fourth equality, we use the compatibility of the Riemannian connection with the metric. In the fifth equality, we use the knowledge that functions of $X$ and $Y$ are constant to show that the first and third terms are equal to $0$. We find that the second and fourth terms are equal to $0$ by computing that $\n_X X$ points along $Y$ and $\n_Y Y$ points along $X$; similarly $\n_Y X$ points along $Y$ and $\n_X Y$ points along $X$. A sample computation along these lines is:
          \[ \langle \n_X X, X \rangle = \frac{1}{2}X\langle X, X \rangle = 0, \]
        and hence $\n_X X$, being orthogonal to $X$, points along $Y$. The other computations are similar.
          
        Now that we have shown $K(X,Y)= \langle \n_{[X,Y]} X, Y \rangle$, we will show that the latter is nonpositive. The bracket $[X,Y]$ is preserved by the local isometries of $F$, so we can write $[X,Y]=aX+bY$ for constant $a,b$. Then,
        \begin{align*}
          K(X,Y) &= \langle \n_{[X,Y]} X, Y \rangle \\
            &= \langle \n_{aX + bY} X, Y \rangle \\
            &= a\langle \n_X X, Y \rangle + b\langle \n_Y X, Y \rangle.
        \end{align*}
        Using compatibility of the connection with the metric again, together with symmetry of the connection and the fact that functions of $X$ and $Y$ are constant, we quickly arrive at:
        \begin{align*}
          \langle \n_X X, Y \rangle &= -\langle X, [X,Y] \rangle = -\langle X, aX+bY \rangle = -a, \\
          \langle \n_Y X, Y \rangle &= -\langle Y, [X,Y] \rangle = -\langle Y, aX+bY \rangle = -b,
        \end{align*}
        from which it follows that
          \[ K(X,Y) = a\langle \n_X X, Y \rangle + b\langle \n_Y X, Y \rangle = -a^2-b^2 \leq 0.\]
        So the curvature of $F$ is nonpositive.
      \end{proof}

\nocite{wong1961isoclinic, wolf1963elliptic, wolf1963geodesic, escobales1975riemannian, gromoll1985one, ranjan1985riemannian, gluck1986geometry, gluck1987fibrations}

\bibliography{thesis}{}

\providecommand{\bysame}{\leavevmode\hbox to3em{\hrulefill}\thinspace}
\providecommand{\MR}{\relax\ifhmode\unskip\space\fi MR }
\providecommand{\MRhref}[2]{%
  \href{http://www.ams.org/mathscinet-getitem?mr=#1}{#2}
}
\providecommand{\href}[2]{#2}
\begin{thebibliography}{10}

\bibitem{andreev1967orbits}
EM~Andreev, EB~Vinberg, and AG~{\'E}lashvili, \emph{Orbits of greatest
  dimension in semi-simple linear {L}ie groups}, Functional Analysis and its
  Applications \textbf{1} (1967), no.~4, 257--261.

\bibitem{besse1978manifolds}
Arthur~L Besse, \emph{Manifolds all of whose geodesics are closed}, vol.~93,
  Springer, 1978.

\bibitem{borel1949some}
Armand Borel, \emph{Some remarks about {L}ie groups transitive on spheres and
  tori}, Bulletin of the American Mathematical Society \textbf{55} (1949),
  no.~6, 580--587.

\bibitem{brocker1985representations}
Theodor Br{\"o}cker and Tammo tom Dieck, \emph{Representations of compact {L}ie
  groups}, vol.~98, Springer, 1985.

\bibitem{escobales1975riemannian}
Richard~H Escobales~Jr, \emph{Riemannian submersions with totally geodesic
  fibers}, Journal of Differential Geometry \textbf{10} (1975), no.~2,
  253--276.

\bibitem{gluck1983great}
Herman Gluck and Frank Warner, \emph{Great circle fibrations of the
  three-sphere}, Duke Mathematical Journal \textbf{50} (1983), no.~1, 107--132.

\bibitem{gluck1986geometry}
Herman Gluck, Frank Warner, and Wolfgang Ziller, \emph{The geometry of the
  {H}opf fibrations}, Enseign. Math.(2) \textbf{32} (1986), no.~3-4, 173--198.

\bibitem{gluck1987fibrations}
\bysame, \emph{Fibrations of spheres by parallel great spheres and {B}erger's
  rigidity theorem}, Annals of Global Analysis and Geometry \textbf{5} (1987),
  no.~1, 53--82.

\bibitem{gromoll1985one}
Detlef Gromoll and Karsten Grove, \emph{One-dimensional metric foliations in
  constant curvature spaces}, Differential geometry and complex analysis,
  Springer, 1985, pp.~165--168.

\bibitem{gromoll1988low}
\bysame, \emph{The low-dimensional metric foliations of {E}uclidean spheres},
  J. Differential Geom \textbf{28} (1988), no.~1, 143--156.

\bibitem{hatcher2004spectral}
Allen Hatcher, \emph{Spectral sequences in algebraic topology}, Unpublished
  book project, available on
  \url{http://www.math.cornell.edu/~hatcher/SSAT/SSATpage.html} (2004).

\bibitem{hopf1931abbildungen}
Heinz Hopf, \emph{{\"U}ber die {A}bbildungen der dreidimensionalen {S}ph{\"a}re
  auf die {K}ugelfl{\"a}che}, Mathematische Annalen \textbf{104} (1931), no.~1,
  637--665.

\bibitem{hopf1935abbildungen}
\bysame, \emph{{\"U}ber die {A}bbildungen von {S}ph{\"a}ren auf {S}ph{\"a}re
  niedrigerer {D}imension}, Fundamenta Mathematicae \textbf{25} (1935), no.~1,
  427--440.

\bibitem{montgomery1943transformation}
Deane Montgomery and Hans Samelson, \emph{Transformation groups of spheres},
  Annals of Mathematics (1943), 454--470.

\bibitem{nuchi2014space}
Haggai Nuchi, \emph{Fiberwise homogeneous fibrations of the 3-dimensional space
  forms by geodesics}, \href{http://arxiv.org/abs/1407.4550}{arXiv:1407.4550}
  (2014).

\bibitem{nuchi2014surprising}
\bysame, \emph{A surprising fibration of {$S^3 \times S^3$} by great
  3-spheres}, \href{http://arxiv.org/abs/1407.4548}{arXiv:1407.4548} (2014).

\bibitem{onishchik1963transitive}
Arkadii~L Onishchik, \emph{Transitive compact transformation groups},
  Matematicheskii Sbornik \textbf{102} (1963), no.~4, 447--485, English
  translation: Amer.~Math.~Soc.~Trans. {\bf 55} (1966) no.~2, 153--194.

\bibitem{ranjan1985riemannian}
Akhil Ranjan, \emph{{R}iemannian submersions of spheres with totally geodesic
  fibres}, Osaka Journal of Mathematics \textbf{22} (1985), no.~2, 243--260.

\bibitem{sepanski2007compact}
Mark~R Sepanski, \emph{Compact {L}ie groups}, vol. 235, Springer, 2007.

\bibitem{simon1996representations}
Barry Simon, \emph{Representations of finite and compact groups}, no.~10,
  American Mathematical Soc., 1996.

\bibitem{wilking2001index}
Burkhard Wilking, \emph{Index parity of closed geodesics and rigidity of {H}opf
  fibrations}, Inventiones mathematicae \textbf{144} (2001), no.~2, 281--295.

\bibitem{wolf1963elliptic}
Joseph~A Wolf, \emph{Elliptic spaces in {G}rassmann manifolds}, Illinois J.
  Math \textbf{7} (1963), 447--462.

\bibitem{wolf1963geodesic}
\bysame, \emph{Geodesic spheres in {G}rassmann manifolds}, Illinois J. Math
  \textbf{7} (1963), 425--446.

\bibitem{wong1961isoclinic}
Yung-Chow Wong, \emph{Isoclinic $n$-planes in {E}uclidean $2n$-space,
  {C}lifford parallels in elliptic $(2n-1)$-space, and the {H}urwitz matrix
  equations}, no.~41, American mathematical society, 1961.

\bibitem{ziller1982homogeneous}
Wolfgang Ziller, \emph{Homogeneous {E}instein metrics on spheres and projective
  spaces}, Mathematische Annalen \textbf{259} (1982), no.~3, 351--358.

\end{thebibliography}
\bibliographystyle{amsplain}
\end{document}